\DeclareMathOperator{\rk}{rk}
\DeclareMathOperator{\disc}{disc}
\DeclareMathOperator{\res}{res}
\DeclareMathOperator{\UD}{UD}
\DeclareMathOperator{\dom}{dom}
\DeclareMathOperator{\opm}{M}
\newcommand{\de}{\delta}
\newcommand{\N}{\mathbb{N}}
\newcommand{\cO}{\mathcal{O}}
\newcommand{\ti}{{\rm t}}
\newcommand{\rr}{r}
\newcommand{\kk}{\mathbb{K}}
\newcommand{\gx}{\xi}
\newcommand{\gX}{\Xi}
\newcommand*{\mat}[1]{\opm_{#1}(\kk)}
\newcommand*{\GL}[1]{\operatorname{GL}_{#1}(\kk)}
\newcommand*{\SL}[1]{\operatorname{SL}_{#1}(\kk)}
\newcommand*{\slie}[1]{\mathfrak{sl}_{#1}(\kk)}
\newcommand*{\ud}[1]{\UD_{#1}}
\newcommand{\Langle}{\mathop{<}\!}
\newcommand{\Rangle}{\!\mathop{>}}
\newcommand{\ulx}{\underline{x}}
\newcommand{\ulg}{\underline{\gx}}
\newcommand{\px}{\kk\!\Langle \ulx\Rangle}
\def\moverlay{\mathpalette\mov@rlay}
\def\mov@rlay#1#2{\leavevmode\vtop{
		\baselineskip\z@skip \lineskiplimit-\maxdimen
		\ialign{\hfil$#1##$\hfil\cr#2\crcr}}}
\newcommand{\plangle}{\moverlay{(\cr<}}
\newcommand{\prangle}{\moverlay{)\cr>}}
\newcommand{\rx}{\kk\plangle \ulx \prangle}
\newtheorem{thm}{Theorem}[section]
\newtheorem{lem}[thm]{Lemma}
\newtheorem{cor}[thm]{Corollary}
\newtheorem{prop}[thm]{Proposition}
\theoremstyle{definition}
\newtheorem{exa}[thm]{Example}
\theoremstyle{remark}
\newtheorem{rem}[thm]{Remark}
\newtheorem{question}[thm]{Question}
\begin{document}
\title[Matrix evaluations of nc rational  functions and Waring problems]{Matrix evaluations of noncommutative rational functions and Waring problems}

\author{Matej Bre\v{s}ar}
\address{Faculty of Mathematics and Physics, University of Ljubljana \&
Faculty of Natural Sciences and Mathematics, University of Maribor \& 
Institute of Mathematics, Physics, and Mechanics, Slovenia}
\email{matej.bresar@fmf.uni-lj.si}

\author{Jurij Volčič}
\address{Department of Mathematics, University of Auckland, New Zealand}
\email{jurij.volcic@auckland.ac.nz}
\thanks{M.\ Brešar was 
 supported by Grant P1-0288, ARIS (Slovenian Research and Innovation Agency).
 J.\  Volčič was supported by the NSF grant DMS-1954709.}

\subjclass[2020]{16R10, 16S10, 16S50, 16S85, 47A56}

\keywords{Noncommutative polynomial, noncommutative rational function, Waring problem, matrix algebra}

\begin{abstract}
Let $\rr$ be a nonconstant noncommutative rational function in $m$ variables over an algebraically closed field $\kk$ of characteristic 0. We show that for 
 $n$ large enough, there exists an $X\in \mat{n}^m$ such that $\rr(X)$ has $n$ distinct and nonzero eigenvalues. 
 This result is used to study the linear and multiplicative Waring problems for matrix algebras. Concerning the linear problem, we  show that for $n$ large enough, every matrix in 
$\slie{n}$ can be written as $\rr(Y)-\rr(Z)$
for some $Y,Z\in \mat{n}^m$. We also discuss variations of this result for the case where $\rr$ is a noncommutative polynomial. Concerning the multiplicative problem,  we show, among other results, that if $f$ and $g$  
are nonconstant polynomials, then, for $n$ large enough, every nonscalar matrix  in $\GL{n}$ 
can be written as 
$f(Y)\cdot g(Z)$ 
for some $Y,Z\in \mat{n}^m$.
\end{abstract}

\maketitle

\section{Introduction}

Throughout, $\kk$ stands for an algebraically closed field of characteristic 0. Let $\ulx=(x_1,\dots,x_m)$ be a tuple of $m$ freely noncommuting variables, let $\px$ be the free $\kk$-algebra of noncommutative polynomials in $\ulx$, and let $\rx$ be its universal skew field of fractions \cite{Coh}, the free field of noncommutative rational functions in $\ulx$. 
For example, $x_1^2+x_1x_2-x_2x_1-x_1+2$ is a noncommutative polynomial, and $(x_1x_2^{-1}x_1-x_2)^{-1}-x_2^{-1}$ is a noncommutative rational function.
Our most general results concern elements $\rr$ from  $\rx$, but   are also new, and of particular interest,  in the special case where $\rr\in\px$. 

Noncommutative rational functions originated in noncommutative algebra \cite{Ami,Coh,GGRW} and automata theory \cite{BR}. Nowadays, they prominently appear in free analysis \cite{KVV,KVV1}, free probability \cite{HMS,CMMPY} and free real algebraic geometry \cite{HMV,AHKM,Vol1}, which study analytic, geometric and asymptotic properties of their matrix  evaluations. More precisely, noncommutative rational functions are (after noncommutative polynomials) the most tractable class of so-called {\it free functions}, which are maps on tuples of matrices of all sizes that preserve direct sums and similarities.
When dealing with  $\rr\in \rx$, 
we will be interested in  the matrix evaluations
$\rr(X)$ with $X\in \mat{n}^m$ (more precisely, $X$ must belong to the domain of $\rr$) for arbitrary $n$. Here,  $\mat{n}$ stands for the algebra of $n \times n$ matrices over $\kk$. 

Our fundamental result, from which all else follows, states that given a  nonconstant  $\rr\in\rx$, 
for every  large enough $n$ there exists an $X$
in $\mat{n}^m$ such that $\rr(X)$ has $n$ distinct eigenvalues (Theorem \ref{t:dist}). Moreover, these eigenvalues can be chosen to be nonzero  
 (Corollary \ref{t:dist1}). 
The proof of this result consists of two natural parts. Firstly, one investigates the image of a noncommutative rational function on matrices of a fixed prime size using tools from central simple algebras and algebraic geometry. Then one uses ampliation technique inherent in free analysis along with classical number theory to glue together the preceding size-dependent conclusions.

Our result on eventually attaining images with pairwise distinct eigenvalues conforms organically with the perspective of free analysis and free probability, which investigate those features of matrix evaluations of noncommutative rational functions that persist for matrices of large sizes.
On the other hand, 
our main motivation behind this result is its applicability to  Waring  problems for matrix algebras.  We explain this in the next paragraphs.

The classical Waring’s problem,  proposed by  Waring in the 18\textsuperscript{th} century and solved affirmatively by Hilbert in 1909,
asks 
whether for every  natural number $k$ there exists a natural number $g(k)$ such that every natural number is a sum of  $g(k)$ $k$th powers.
Various variations of this problem occur in different areas of mathematics, including  
 group theory. 
In \cite{LST}, Larsen, Shalev, and Tiep 
 proved  that for any word  $w \ne 1$,  $w(G)^2 =G$ for every
 finite non-abelian simple group $G$ of sufficiently high order (here, 
$w(G)$ denotes 
  the image of the word map 
 induced by $w$).
More generally, if $w,w'$ are two nontrivial words, then $w(G)w'(G)=G$.
 
 Seeking analogs of this result in noncommutative algebra, it is natural to replace the role of words by noncommutative polynomials, and the role of finite simple groups by matrix algebras. This type of Waring problem was initiated in \cite{B} and discussed further  in 
  \cite{ BS, BSb, Chen, PP}.
    More specifically, the following question, raised as Question 4.8 in \cite{BS} and studied (but not solved) in \cite{BSb},  emerged as central: 
Given a nonconstant polynomial $f\in \px$, does  
$f(\mat{n}) - f(\mat{n})$ contain 
the Lie algebra of trace zero matrices $\slie{n}$ for all sufficiently large $n$? Here,  $f(\mat{n})$ stands for the image of $f$ in $\mat{n}$ (i.e., $f(\mat{n})$ is the set of all $ f(A_1,\dots,A_m)$ with $ A_i\in \mat{n}$), and 
$f(\mat{n}) - f(\mat{n})$ for 
$\{B-B': B,B'\in f(\mat{n})\}$. As a byproduct of the  result on distinct eigenvalues discussed above, we will solve this problem, not only for noncommutative polynomials but for noncommutative rational functions; see Theorem \ref{c:diff}. In later parts of Section  \ref{s3}, we  discuss extensions and variations of this result. In particular, we consider the question of when matrices with nonzero trace are linear combinations of two matrices from  $f(\mat{n})$ (see Theorem \ref{c23}). Also, we obtain (partial) extensions of our results from matrices over $\kk$ to algebras $\mathcal A$  satisfying $A\cong \opm_2(\mathcal A)$ (see Theorem \ref{cbx}). 

The problem of presenting matrices as linear combinations of matrices from the image of a rational function or  polynomial 
 may be called the {\it linear Waring problem}.
The fundamental theorem on eigenvalues also enables  us to consider a different,  
 {\it multiplicative Waring problem} for polynomials and rational functions, which is even closer to the group-theoretic one and has, to the best of our knowledge,  not yet been studied in the literature. This problem is the topic of Section \ref{s4}. One of our results, Theorem \ref{c:prod},  states that if $f,g\in \px$ are nonconstant polynomials, then, for $n$ large enough, every nonscalar matrix  in $\GL{n}$ is contained in 
$ f(\mat{n})\cdot g(\mat{n}).$ 
We also mention Theorem \ref{cza} which in particular shows that 
if $f\in \px$ is a nonzero polynomial with zero constant term, 
 then   for $n$ large enough, every matrix in
$\mat{n}$ is a product of twelve matrices from $f(\mat{n})$.

We do not know at present whether  scalar matrices  should really be excluded in Theorem \ref{c:prod}. This is one of the problems that are left open. It is interesting that a genuinely different problem that arises in Section \ref{s3} also involves scalar matrices (see Question \ref{q36}). We also do not know whether Theorem \ref{cza}  holds for nonconstant polynomials
with nonzero constant term  (and, on the other hand, we believe that the number twelve can be lowered).

\subsection*{Acknowledgments}
The authors thank Igor Klep and Peter Šemrl for insightful discussions.

\section{Eigenvalues of matrix evaluations of noncommutative rational functions }

For a comprehensive study of the skew field of noncommutative rational functions, also called the free skew field, we refer to \cite{Coh}. 
Analogous to how a (usual) rational function can be given as a fraction of different pairs of polynomials, a noncommutative rational function can be given by different formal rational expressions. For example, the expressions $x_2(1-x_1x_2)^{-1}x_2$ and $(x_1^{-1}x_2^{-1}-1)^{-1}$ represent the same noncommutative rational function, in the sense that their evaluations agree on all matrix tuples where 
arithmetic operations in both expressions can be carried out (that is, where the pertinent inverses exist).
Given $\rr\in\rx$, let $\dom_n\rr$ be the domain of $\rr$ in $\mat{n}^m$, defined as the union of the domains of all representatives of $\rr$ (formal rational expressions) in $\mat{n}^m$ \cite{KVV,Vol}.

Let $\gX(n,\xi)$ be an $m$-tuple of $n\times n$ generic matrices, where
$\gX(n,\xi)_k=(\gx_{kij})_{i,j=1}^n \in \opm_n(\kk[\ulg])$ and
$$\ulg=(\gx_{kij}\colon 1\le i,j\le n,\,1\le k\le m )$$
is a tuple of independent commuting indeterminates, viewed as the coordinates of the affine space $\mat{n}^m$. Let $\ud{n}$ be the universal division $\kk$-algebra of degree $n$ generated by $\gX(n,\xi)_1,\dots,\gX(n,\xi)_m$ \cite[Section 3.2]{Row}.

\begin{lem}\label{l:old}
For every $\rr\in\rx$ there exist $\de\in\N$, an affine matrix pencil $L=A_0+A_1x_1+\cdots +A_mx_m$ with $A_j\in\mat{\de}$ and $b,c\in\kk^\de$ such that $\rr=c^\ti L^{-1}b$ holds in $\rx$, and
\begin{equation}\label{e:inc}
\dom_n \rr\supseteq \left\{X\in\mat{n}^m\colon \det L(X)\neq0 \right\}\neq\emptyset
\end{equation}
for all $n\ge \de-1$.
\end{lem}

\begin{proof}
The statement holds by \cite[Corollary 1.3]{CR} and \cite[Propositions 1.12 and 2.10]{DM}.
\end{proof}

Given $\rr\in\rx$, its representation as $\rr=c^\ti L^{-1}b$ as in Lemma \ref{l:old} is called a \emph{linear representation} or \emph{realization} of $\rr$ \cite{CR} of size $\de$. Moreover, the linear representation is \emph{minimal} if $\de$ is minimal; by \cite[Corollary 1.6]{CR}, a minimal linear representation of $\rr$ is unique up to a left-right basis change (and is thus a canonical object representing $\rr$).

\begin{rem}
The scope of Lemma \ref{l:old} is restricted to the purpose of this paper; namely, it ensures the existence of a linear representation of $\rr\in\rx$ (which can be mechanically constructed from a representative of $r$ \cite{CR}) and imposes an effective bound on $n$ for which the invertibility set of $L$ is nonempty \cite{DM}. 
However, Lemma \ref{l:old} may be further strengthened using techniques from control theory. Namely, a minimal linear representation of $\rr$ can also be obtained constructively (albeit with considerably more work) \cite[Section 2]{PV1}, and the inclusion in \eqref{e:inc} is actually an equality when $L$ arises from a minimal linear representation of $\rr$ \cite[Corollary 4.8]{PV2}.
\end{rem}

\begin{rem}
Note that for $f\in\px$, we have that $f$ vanishes (or is singular) on $\mat{n}^m$ if $f$ vanishes (or is singular) on $\mat{n+1}^m$. On the other hand, for $\rr\in\rx$ it can happen that $\rr$ vanishes on $\dom_{n+1}\rr\neq\emptyset$ and $\rr$ is non-singular on $\dom_{n}\rr\neq\emptyset$ \cite{Ber}. Nevertheless, Lemma \ref{l:old} guarantees that this can only happen for small $n$ (relative to the minimal size of a linear representation of $\rr$).
\end{rem}

\begin{lem}\label{l:noncent}
Let $\rr\in\rx\setminus\kk$ have a linear representation of size $\de$. Then for all $n\ge2\de$,
\begin{equation}\label{e:noncent}
\{X\in \dom \rr\colon \rr(X)\notin\kk\cdot I \}
\end{equation}
is a nonempty Zariski open subset of $\mat{n}^m$.

If $\rr\in\px\setminus\kk$ is of degree $d$, then \eqref{e:noncent} is nonempty for all $n\ge\lceil\frac{d}{2}\rceil+1$.
\end{lem}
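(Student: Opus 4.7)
My plan handles openness as a routine preliminary, then establishes the two nonemptiness bounds separately, using an ampliation-plus-inversion argument in the rational case and a classical PI bound in the polynomial case. Openness is easy: $\dom_n \rr$ is a Zariski open subset of $\mat{n}^m$, and inside it the condition $\rr(X)\in\kk\cdot I$ is cut out by the vanishing of finitely many rational functions (the off-diagonal entries of $\rr(X)$ and the differences of its diagonal entries), so \eqref{e:noncent} is the complement of a closed subvariety of the open set $\dom_n\rr$, hence Zariski open in $\mat{n}^m$.

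For the rational bound $n\ge 2\de$, I argue by contradiction: suppose $\rr(X)\in\kk\cdot I$ for every $X\in\dom_n\rr$. Splitting $n=a+b$ with $a=\lceil n/2\rceil$ and $b=\lfloor n/2\rfloor$, both $\ge\de$, Lemma~\ref{l:old} supplies $X\in\dom_a\rr$ and $Y\in\dom_b\rr$, and the free-function identity $\rr(X\oplus Y)=\rr(X)\oplus\rr(Y)$ forces $\rr(X)=\alpha I_a$ and $\rr(Y)=\alpha I_b$ for a common scalar $\alpha\in\kk$; this $\alpha$ must be independent of both $X$ and $Y$ since each evaluation depends on only one of them, hence $\rr-\alpha$ vanishes identically on $\dom_a\rr$. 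The crucial leverage now comes from $\rx$ being a skew field: since $\rr\notin\kk$, the element $\rr-\alpha$ is nonzero in $\rx$, so $(\rr-\alpha)^{-1}\in\rx$. A Schur-complement construction --- augmenting the pencil $L$ by an extra row/column built from $c^\ti$, $b$, and the constant $\alpha$ --- yields a linear representation of $(\rr-\alpha)^{-1}$ of size $\de+1$. Lemma~\ref{l:old} then guarantees $\dom_a(\rr-\alpha)^{-1}\ne\emptyset$ for $a\ge\de$, and since $\mat{a}^m$ is irreducible, its intersection with $\dom_a\rr$ is nonempty, producing a point at which $\rr-\alpha$ is simultaneously zero (by the first step) and invertible (from the realization of its inverse) --- a contradiction. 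The step requiring most care is the Schur-complement construction: the augmented pencil must stay affine in $\ulx$, and one must verify that a designated entry of its inverse equals $\pm(\rr-\alpha)^{-1}$; this is the main technical point, but standard.

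For the sharper polynomial bound $n\ge\lceil d/2\rceil+1$, the inverse trick is replaced by classical PI theory. If $f(\mat{n}^m)\subseteq\kk\cdot I$, then adjoining a fresh variable $x_{m+1}$ makes the commutator $[f,x_{m+1}]$ a polynomial identity of $\mat{n}$ of degree $d+1$. It is nonzero in $\kk\langle\ulx,x_{m+1}\rangle$ since $f\notin\kk$ (the centralizer of $x_{m+1}$ in the free algebra is $\kk[x_{m+1}]$, which meets $\px$ only in $\kk$). The classical lower bound --- every nonzero PI of $\mat{n}(\kk)$ in characteristic zero has degree at least $2n$, via Amitsur--Levitzki and multilinearization --- then forces $d+1\ge 2n$, i.e., $n\le\lceil d/2\rceil$, contradicting the hypothesis.
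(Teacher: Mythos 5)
Your openness argument and the polynomial bound coincide with the paper's (the paper cites Rowen's bound on the minimal degree of an identity of $\mat{n}$ rather than naming Amitsur--Levitzki, but it is the same fact, applied to the commutator $x_0 f - f x_0$). For the rational bound $n\ge 2\de$, however, you take a genuinely different route, and it works. The paper argues directly: since $\rr\notin\kk$, the element $(x_0\rr-\rr x_0)^{-1}$ is a well-defined rational function in $m+1$ variables, and from $\rr=c^\ti L^{-1}b$ one writes down an explicit linear representation of it of size $2\de+1$; Lemma~\ref{l:old} then gives a tuple $(X_0,X)\in\mat{n}^{m+1}$ in its domain for every $n\ge 2\de$, at which $\rr(X)$ fails to commute with $X_0$ and hence is nonscalar. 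Your proof instead proceeds by contradiction: the ampliation/direct-sum step (using that domains and evaluations respect $\oplus$) forces a single scalar $\alpha$ with $\rr\equiv\alpha$ on $\dom_a\rr$, and you then realize $(\rr-\alpha)^{-1}$ by a pencil of size $\de+1$ and intersect its nonempty domain with $\dom_a\rr$ to get a point where $\rr-\alpha$ is simultaneously zero and invertible. Both arguments deliver $n\ge2\de$ (yours in fact already works at $n=2\de-1$); the paper's has the advantage of being direct and needing no direct-sum input, while yours avoids the auxiliary variable and uses a smaller linear representation. One small point worth making explicit in your write-up: at the intersection point $X$ you should note that $X\in\dom_a(\rr-\alpha)\cap\dom_a\bigl((\rr-\alpha)^{-1}\bigr)$ implies $(\rr-\alpha)(X)\cdot(\rr-\alpha)^{-1}(X)=I$ (evaluation respects the ring operations on the common domain), so $(\rr-\alpha)(X)$ is invertible, which is what contradicts $(\rr-\alpha)(X)=0$.
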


\begin{proof}
Let $x_0$ be an auxiliary noncommuting variable. By assumption, $(x_0\rr-\rr x_0)^{-1}$ is a well-defined noncommutative rational function in $m+1$ variables. If $\rr=c^\ti L^{-1}b$ is a linear representation of size $\de$, then 
\begin{align*}
(x_0\rr-\rr x_0)^{-1}
&=\left(c^\ti L^{-1} bx_0-c^\ti x_0 L^{-1}b\right)^{-1}\\
&=\begin{pmatrix}1&0&0\end{pmatrix}
\begin{pmatrix}L&0&bx_0 \\ 0 &L & b \\-c^\ti & c^\ti x_0 & 0\end{pmatrix}^{-1}
\begin{pmatrix}1\\0\\0\end{pmatrix}=: \widetilde{c}^\ti \widetilde{L}^{-1}\widetilde{b}
\end{align*}
is a linear representation of size $\widetilde{\de}=2\de+1$, and \eqref{e:noncent} contains the projection of $\dom_n (x_0\rr-\rr x_0)^{-1}\subset \mat{n}^{m+1}$ onto its last $m$ components in $\mat{n}^m$, and
$$\dom_n (x_0\rr-\rr x_0)^{-1}\supseteq \{X\in\mat{n}^{m+1}\colon \det \widetilde{L}(X)\neq0 \}\neq\emptyset$$
for $n\ge \widetilde{\de}-1$ by \cite[Propositions 1.12 and 2.10]{DM}.
Therefore \eqref{e:noncent} is nonempty for $n\ge \widetilde{\de}-1$; it is also evidently Zariski open.

Lastly, if $\rr$ is a nonconstant noncommutative polynomial of degree $d$, then $x_0\rr-\rr x_0$ is a nonzero polynomial on $\mat{n}^m$ for $d+1\le 2n-1$ by \cite[Lemma 1.4.3]{Row}. In particular, \eqref{e:noncent} is nonempty (and clearly Zariski open) for $n\ge \lceil\frac{d}{2}\rceil+1$. 
\end{proof}

\begin{lem}\label{l:noconsteig}
Let $\rr\in\rx$ with $\dom_n\rr\neq\emptyset$ and $\lambda\in\kk$. If $\lambda$ is an eigenvalue of $\rr(X)$ for all $X$ in a Zariski dense subset of $\dom_n\rr$, then $\rr(\gX(n,\xi))=\lambda I$.
\end{lem}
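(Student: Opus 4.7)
The plan is to use the determinant as a bridge between pointwise eigenvalue information and the algebraic structure of the universal division algebra $\ud{n}$, exploiting that $\ud{n}$ has no zero divisors.

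First I would note that $X\mapsto \det(\rr(X)-\lambda I)$ is a rational function on $\mat{n}^m$, regular on the nonempty Zariski open set $\dom_n\rr$. Since $\mat{n}^m$ is irreducible, $\dom_n\rr$ is itself Zariski dense in $\mat{n}^m$, and the hypothesis provides a Zariski dense subset of $\dom_n\rr$ on which this rational function vanishes. Hence it must vanish identically on $\mat{n}^m$.

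Next, because $\dom_n\rr\neq\emptyset$, I would argue that the evaluation $\rr(\gX(n,\xi))$ is a well-defined element of $\ud{n}\subset \opm_n(\kk(\ulg))$: pick any representative of $\rr$ whose nonsingular locus on $\mat{n}^m$ is nonempty; then each matrix inverse appearing in that expression is nonsingular at the generic point, since its determinant is a rational function on $\mat{n}^m$ that is nonzero somewhere, hence a nonzero element of $\kk(\ulg)$. The determinant of $\rr(\gX(n,\xi))-\lambda I\in\opm_n(\kk(\ulg))$ is then exactly the specialization of the rational function from the first paragraph at the generic tuple, and so vanishes in $\kk(\ulg)$.

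Finally, $\rr(\gX(n,\xi))-\lambda I$ belongs to the division algebra $\ud{n}$, and is therefore either zero or invertible there. Any invertible element of $\ud{n}$ is also invertible as a matrix in $\opm_n(\kk(\ulg))$ and has nonzero determinant, contradicting the vanishing established in the previous step. Hence $\rr(\gX(n,\xi))-\lambda I=0$, which is the claim. The principal conceptual point will be the middle step—reconciling pointwise singularity on $\dom_n\rr$ with generic singularity and then invoking the division algebra property of $\ud{n}$; everything else is forced by Zariski density.
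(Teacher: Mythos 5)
Your argument is correct and follows the same route as the paper's: reduce the hypothesis to the vanishing of $\det\bigl(\rr(\gX(n,\xi))-\lambda I\bigr)$ in $\kk(\ulg)$, then invoke that $\ud{n}$ is a division algebra to conclude the element is zero. The paper's version is terser (it asserts the determinant vanishes directly and calls the element a zero divisor), but the underlying reasoning is identical; your proposal simply makes explicit the density and specialization steps that the paper leaves implicit.
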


\begin{proof}
By the assumption we have $\det(\rr(\gX(n,\xi))-\lambda I)=0$, so $\rr(\gX(n,\xi))-\lambda I$ is a zero divisor in $\ud{n}$. Since the latter is a division algebra, if follows $\rr(\gX(n,\xi))=\lambda I$.
\end{proof}

Let $R$ be a commutative domain, and $f\in R[t]$. The \emph{discriminant} of $f$ \cite[Section 12.1.B]{GKZ}, $\disc(f)\in R$, is a polynomial in the coefficients of $f$, with the property that $\disc(f)=0$ if and only if $f$ has a repeated root in the algebraic closure  of the quotient field of $R$. This notion is used in the proof of the following is a rational version of \cite[Proposition 1.7]{KB} (see also the proof of \cite[Theorem 4.1]{BS}).

\begin{lem}\label{l:dist_prime}
Let $p\in\N$ be a prime, and let $\rr\in\rx$ with $\dom_p\rr\neq\emptyset$ be such that $\rr(\gX(p,\gx))$ is not central in $\ud{p}$. Then
\begin{equation}\label{e:dist}
	\{X\in \dom_p\rr\colon \rr(X) \text{ has }p \text{ distinct eigenvalues} \}
\end{equation}
is a nonempty Zariski open subset of $\mat{p}^m$.
\end{lem}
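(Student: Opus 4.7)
My plan is to reduce \eqref{e:dist} to the nonvanishing of a discriminant in the center of $\ud p$, and then exploit that $p$ is prime.

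First I would note that $\dom_p\rr$ is a nonempty Zariski open subset of $\mat p^m$ by Lemma~\ref{l:old}, and that a matrix $M\in\mat p$ has $p$ distinct eigenvalues exactly when $\disc(\chi_M)\neq 0$. Since $X\mapsto\disc(\chi_{\rr(X)})$ is a rational function on $\mat p^m$ that is regular on $\dom_p\rr$, the set \eqref{e:dist} is automatically Zariski open in $\mat p^m$; only its nonemptiness needs proof. This in turn reduces to showing that the element $\disc(\chi_a)\in\ud p$, where $a:=\rr(\gX(p,\gx))$, is nonzero: this discriminant lies in the center $Z$ of $\ud p$, and the inclusion $Z\hookrightarrow\kk(\ulg)$ sends nonzero elements to rational functions on $\mat p^m$ that do not vanish on a dense open subset, which intersected with $\dom_p\rr$ yields the desired set.

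The heart of the argument is where primality of $p$ enters. The universal division algebra $\ud p$ is central simple of degree $p$ over $Z$, and by hypothesis $a\notin Z\cdot I$. Hence $[Z[a]:Z]$ is a divisor of $p$ strictly greater than $1$, so it equals $p$; thus $Z[a]$ is a maximal subfield of $\ud p$ and the minimal polynomial $\mu_a\in Z[t]$ of $a$ has degree $p$, coinciding with the reduced characteristic polynomial $\chi_a$. Since $\kar\kk=0$, the extension $Z[a]/Z$ is separable, so $\chi_a=\mu_a$ has $p$ distinct roots in an algebraic closure of $Z$; hence $\disc(\chi_a)\neq 0$ in $Z$.

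The main conceptual obstacle is the degree argument in the previous paragraph: it is precisely the primality of $p$ that forbids a noncentral $a\in\ud p$ from satisfying a polynomial of some intermediate degree $d\mid p$, in which case $\chi_a=\mu_a^{p/d}$ would automatically have repeated factors and $\disc(\chi_a)$ would vanish in $Z$, breaking the strategy. Everything else\,---\,Zariski openness, the Cayley--Hamilton/reduced characteristic polynomial formalism over $Z$, and the passage from central elements to rational functions on $\mat p^m$\,---\,is standard central simple algebra machinery.
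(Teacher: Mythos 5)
Your proof is correct and takes essentially the same approach as the paper: both identify the subfield generated by $a=\rr(\gX(p,\gx))$ over the center $Z$ of $\ud p$, use primality of $p$ to force its degree over $Z$ to be exactly $p$, deduce that the minimal polynomial coincides with the degree-$p$ characteristic polynomial and is separable (since $\kar\kk=0$ makes $Z$ perfect), conclude $\disc(\chi_a)\neq0$ in $Z$, and specialize to a concrete tuple $X$. The only cosmetic difference is that you spell out the divisibility $[Z[a]:Z]\mid p$ (via the centralizer theorem for central simple algebras), which the paper leaves implicit in the phrase ``since $p$ is prime and $\ud p$ has dimension $p^2$ over $Z_p$.''
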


\begin{proof}
The set \eqref{e:dist} is clearly Zariski open; one only needs to show that \eqref{e:dist} is nonempty. 
Let $Z_p$ denote the center of $\ud{p}$, and let $F$ be the $Z_p$-subfield of $\ud{p}$ generated by $\rr(\gX(p,\gx))$. Then the inclusions $Z_p\subset F\subset \ud{p}$ are strict; the first one because $\rr(\gX(p,\gx))$ is not central, and the second one because $F$ is commutative and $\ud{p}$ is not. Since $p$ is prime and $\ud{p}$ has dimension $p^2$ over $Z_p$, it follows that $F$ has dimension $p$ over $Z_p$. Let $\chi\in Z_p[t]$ be the characteristic polynomial of $\rr(\gX(p,\gx))$. Then $\deg \chi =p$ and $\chi$ is also the minimal polynomial of $\rr(\gX(p,\gx))$. Furthermore, $\chi$ is irreducible over $Z_p$ since $F\cong Z_p[t]/(\chi)$ is a field. Since $\kk$ has characteristic 0, $Z_p$ is a perfect field, so $\chi$ is separable and has only simple roots in its splitting field. Therefore $\disc(\chi)$ is a nonzero element of $Z_p$. Since $\kk$ is infinite, there exists $X\in\dom_p \rr$ such that $\disc(\chi)|_{\gX(p,\gx)=X}\neq0$. Hence $\chi|_{\gX(p,\gx)=X}$ has $p$ distinct roots, so $\rr(X)$ has $p$ distinct eigenvalues.
\end{proof}

\begin{lem}\label{l:disc}
Let $R,S$ be domains containing $\kk$, and $f\in R[t], g\in S[t]$ without roots in $\kk$. For $fg=f\cdot g\in (R\otimes_{\kk}S)[t]$ we have $\disc(fg)\neq 0$ if and only if $\disc(f)\disc(g)\neq0$. 
\end{lem}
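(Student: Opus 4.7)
The plan is to invoke the classical product formula
\begin{equation*}
\disc(fg)=\disc(f)\,\disc(g)\,\res(f,g)^2,
\end{equation*}
which holds (up to a harmless sign) as a universal polynomial identity in the coefficients of $f$ and $g$, and is easily derived from the factorized expressions for $\disc$ and $\res$ in terms of roots in a splitting field. Since $\kk$ is algebraically closed and $R,S$ are $\kk$-domains, $R\otimes_\kk S$ is itself a domain, so the above factorization takes place in a domain. The forward direction is then immediate: if $\disc(fg)\neq 0$, then each factor on the right is nonzero, in particular $\disc(f)\disc(g)\neq 0$. Note that the no-roots-in-$\kk$ hypothesis is not needed here.

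For the converse, assume $\disc(f)\disc(g)\neq 0$; it then suffices to prove $\res(f,g)\neq 0$ in $R\otimes_\kk S$. Let $R_1=R[\alpha_1,\dots,\alpha_d]\subset\overline{\mathrm{Frac}(R)}$ be the subring obtained by adjoining all roots of $f$, and define $S_1\subset\overline{\mathrm{Frac}(S)}$ analogously for $g$; these are still $\kk$-domains, so $T:=R_1\otimes_\kk S_1$ is again a domain (and it contains $R\otimes_\kk S$). Factoring $f=a\prod_i(t-\alpha_i)$ and $g=b\prod_j(t-\beta_j)$ in $T[t]$, the standard expression for the resultant reads
\begin{equation*}
\res(f,g)=a^e\,b^d\prod_{i,j}(\alpha_i\otimes 1-1\otimes\beta_j)\in T,
\end{equation*}
where $d=\deg f$ and $e=\deg g$. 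Since $T$ is a domain, it suffices to show each of these factors is nonzero.

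The scalars $a\otimes 1$ and $1\otimes b$ are nonzero in $T$ because $a,b$ are nonzero leading coefficients in $R_1,S_1$. For the differences $\alpha_i\otimes 1-1\otimes\beta_j$, the decisive observation is that $(R_1\otimes 1)\cap(1\otimes S_1)=\kk\cdot 1$ inside $T$: picking $\kk$-bases of $R_1$ and $S_1$ each containing $1$ yields a $\kk$-basis of $T$ in which this intersection is manifest. Consequently, an equality $\alpha_i\otimes 1=1\otimes\beta_j$ would force $\alpha_i,\beta_j\in\kk$, contradicting the hypothesis that neither $f$ nor $g$ has a root in $\kk$. This last step is the main obstacle, and it is the only place where the no-roots-in-$\kk$ hypothesis enters; indeed without it, $f$ and $g$ could share a common root already in $\kk\subset R\otimes_\kk S$ and $\res(f,g)$ could genuinely vanish.
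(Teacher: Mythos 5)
Your proof is correct and follows essentially the same strategy as the paper's: invoke the product formula $\disc(fg)=\pm\disc(f)\disc(g)\res(f,g)^2$ in the domain $R\otimes_\kk S$, then reduce to showing $\res(f,g)\neq 0$ by passing to a larger $\kk$-domain that contains all roots of $f$ in one tensor factor and all roots of $g$ in the other, so that the two root sets can only meet inside $\kk$ (excluded by hypothesis). The only cosmetic difference is that the paper works with algebraic closures $F_1,F_2$ of the quotient fields and phrases the conclusion as ``$f$ and $g$ have no common roots,'' whereas you adjoin the roots at the ring level and make the resultant vanishing explicit through its product formula; your remark that the forward implication needs no hypothesis on roots is a small but accurate addition.
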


\begin{proof}
Throughout the proof we identify $f=f\otimes 1,g=1\otimes g\in(R\otimes_{\kk}S)[t]$.
By \cite[Section 12.1.B]{GKZ} we have
$$\disc(fg)=(-1)^{\deg f \deg g}\disc(f)\disc(g)\res(f,g)^2,$$
where $\res(f,g)$ is the resultant of $f$ and $g$ \cite[Section 12.1.A]{GKZ}. 
Since $\kk$ is algebraically closed, $R\otimes_{\kk}S$ is a domain (see e.g. \cite[Theorem 1.6 and Example 1.1.10]{Sha}).
Let $F_1,F_2,F$ be algebraic closures of the quotient fields of $R,S,
R\otimes_{\kk}S$, 
respectively; note that $F_1\otimes_{\kk}F_2$ embeds into $F$. The roots of polynomials $f$ and $g$ lie in $(F_1\otimes 1)\setminus\kk$ and $(1\otimes F_2)\setminus\kk$, respectively. In particular, $f$ and $g$ have no common roots, so $\res(f,g)\neq0$ by \cite[Section 12.1.A]{GKZ}.
\end{proof}

The next result shows that nonconstant noncommutative polynomials and rational functions eventually attain values with pairwise distinct eigenvalues (for a stronger conjecture, see \cite[Problem 1.4]{KB}).

\begin{thm}\label{t:dist}
Let $\rr\in\rx\setminus\kk$ admit a linear representation of size $\de$, and let $p<q$ be primes larger than $2\delta$. Then for every $n\ge (p-1)(q-1)$,
\begin{equation}\label{e:dist2}
\{X\in \dom_n\rr\colon \rr(X) \text{ has }n \text{ distinct eigenvalues} \}
\end{equation}
is a nonempty Zariski open subset of $\mat{n}^m$.
\end{thm}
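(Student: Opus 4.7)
\emph{Plan.} The claim has two parts: Zariski openness and non-emptiness of \eqref{e:dist2}. Openness is straightforward from general principles, since the discriminant of the characteristic polynomial of $\rr$ is a rational function on $\mat{n}^m$ whose non-vanishing together with membership in $\dom_n\rr$ cuts out \eqref{e:dist2}. The real content is non-emptiness, which I would establish by building an explicit witness as a block-diagonal direct sum.

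The arithmetic input is the Sylvester--Frobenius coin problem: since $p,q$ are distinct primes and hence coprime, every integer $n\ge(p-1)(q-1)=pq-p-q+1$ can be written as $n=ap+bq$ with $a,b$ non-negative integers. So it suffices to produce the eigenvalue spread separately for sizes $p$ and $q$ and then glue. Since $p,q>2\de$, Lemma \ref{l:noncent} gives a nonempty open set of tuples on which $\rr$ takes values outside $\kk\cdot I$, which forces $\rr(\gX(p,\xi))$ and $\rr(\gX(q,\xi))$ to be non-central in $\ud{p}$ and $\ud{q}$ respectively (otherwise every value of $\rr$ at either size would be scalar). Lemma \ref{l:dist_prime} then yields nonempty Zariski open sets $U_p\subseteq\dom_p\rr$ and $U_q\subseteq\dom_q\rr$ on which $\rr$ has $p$, respectively $q$, distinct eigenvalues.

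I would next pick tuples $X_p^{(1)},\dots,X_p^{(a)}\in U_p$ and $X_q^{(1)},\dots,X_q^{(b)}\in U_q$ with algebraically independent entries, and take $X$ to be their block-diagonal direct sum. Using a fixed linear representation $\rr=c^\ti L^{-1}b$ one checks that $L(X)$ is the block-diagonal sum of the $L$-values on the pieces, so $X\in\dom_n\rr$ and $\rr(X)$ is the corresponding block-diagonal sum of the block values. Hence its characteristic polynomial factors as the product of the characteristic polynomials $\chi_i$ (for the $p$-blocks) and $\eta_j$ (for the $q$-blocks). Each factor is separable by the choice of the blocks, and, crucially, each has no root in $\kk$: otherwise a constant $\lambda\in\kk$ would be an eigenvalue of $\rr(\gX(p,\xi))$ or $\rr(\gX(q,\xi))$ on a Zariski dense subset, and Lemma \ref{l:noconsteig} would then force $\rr(\gX(p,\xi))=\lambda I$ or $\rr(\gX(q,\xi))=\lambda I$, contradicting non-centrality.

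The main obstacle is ruling out eigenvalue collisions between distinct blocks. Because the block coordinates are algebraically independent, the $\chi_i,\eta_j$ live in polynomial rings in disjoint sets of variables; iterating Lemma \ref{l:disc} then yields that the discriminant of the product $\prod_i\chi_i\prod_j\eta_j$ is nonzero in the (tensor product) coordinate ring. Consequently, for a Zariski dense set of choices of the blocks, $\rr(X)$ has a separable characteristic polynomial, i.e., $n$ distinct eigenvalues, establishing non-emptiness of \eqref{e:dist2}.
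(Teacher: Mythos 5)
Your proof follows essentially the same route as the paper's: use Lemma \ref{l:noncent} to get non-centrality of $\rr(\gX(p,\xi))$ and $\rr(\gX(q,\xi))$, apply Lemma \ref{l:dist_prime} at the two prime sizes, glue via the Sylvester--Frobenius decomposition $n=ap+bq$, use Lemma \ref{l:noconsteig} to rule out roots in $\kk$, and iterate Lemma \ref{l:disc} on a block-diagonal generic tuple to get a nonzero discriminant before specializing. One small caution on phrasing: ``pick tuples with algebraically independent entries'' should be understood as working with generic (indeterminate-entry) tuples over disjoint sets of variables, as you do in the rest of the argument, rather than choosing concrete matrices over $\kk$ with mutually transcendental entries --- the latter reading would break down, for example, when $\kk=\overline{\mathbb{Q}}$, which has no transcendental elements.
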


\begin{proof}
By Lemma \ref{l:noncent}, $\rr(\gX(n,\gx))$ is a well-defined non-central element of $\ud{n}$ for every $n\ge 2\de$. Since Zariski openness is evident, it suffices to show that \eqref{e:dist2} is nonempty. The discriminants of the characteristic polynomials of $\rr(\gX(p,\gx))$ and $\rr(\gX(q,\gx))$ are nonzero by Lemma \ref{l:dist_prime}. 
If $n\ge (p-1)(q-1)$, then $n$ can be expressed as $ap+bq$ for some $a,b\in\N$ by Sylvester's theorem \cite[Theorem 2.1.1]{Alf}. 
Let $a,b\in\N$ be arbitrary, and consider the tuple of block diagonal matrices
$$\gX:=\gX(p,\gx^{(1)})\oplus\cdots\oplus \gX(p,\gx^{(a)})\oplus
\gX(q,\gx^{(a+1)})\oplus\cdots\oplus \gX(q,\gx^{(a+b)})$$
over $\kk[\ulg^{(1)}]\otimes_{\kk}\cdots\otimes_{\kk} \kk[\ulg^{(a+b)}]$. Then
$$\rr(\gX)=\rr\left(\gX(p,\gx^{(1)})\right)\oplus\cdots\oplus \rr\left(\gX(p,\gx^{(a)})\right)\oplus
\rr\left(\gX(q,\gx^{(a+1)})\right)\oplus\cdots\oplus \rr\left(\gX(q,\gx^{(a+b)})\right),$$
and the characteristic polynomial of $\rr(\gX)$ is a product of the characteristic polynomials of $\rr\left(\gX(p,\gx^{(i)})\right)$ for $i=1,\dots,a$ and $\rr\left(\gX(q,\gx^{(j)})\right)$ for $j=1,\dots,b$. None of these have a root in $\kk$ by Lemma \ref{l:noconsteig}. Therefore the discriminant of the characteristic polynomial $\chi$ of $\rr(\gX)$ is nonzero by Lemma \ref{l:disc}.
Since $\kk$ is infinite, there exists a tuple of block-diagonal matrices $X\in\dom_{ap+bq} \rr$ such that $\disc(\chi)|_{\gX=X}\neq0$. Hence $\rr(X)$ has $ap+bq$ distinct eigenvalues, so \eqref{e:dist2} is nonempty for $n=ap+bq$.
\end{proof}

\begin{rem}
Using Bertrand's postulate \cite[Theorem 8.7]{NZM}, one concludes that \eqref{e:dist2} is nonempty for all $n\ge (2\de-3-1)(2(2\de-3)-3-1)=4(\de-2)(2\de-5)$.
\end{rem}

\begin{cor} \label{t:dist1}
Let $\rr$ be a nonconstant noncommutative rational function. For all $n$ large enough, the set $\cO_\rr\subseteq\mat{n}^m$ where $\rr$ attains values with distinct and nonzero eigenvalues is Zariski open and nonempty.
\end{cor}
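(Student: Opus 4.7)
The plan is to reduce the statement to Theorem~\ref{t:dist} by applying that theorem not to $\rr$ itself, but to the rational function $\rr^{-1}\in\rx$. Since $\rr$ is nonconstant, it is in particular nonzero in the skew field $\rx$, so $\rr^{-1}\in\rx$ is well-defined; and $\rr^{-1}$ is again nonconstant, since if $\rr^{-1}=\lambda\in\kk$ then $\rr=\lambda^{-1}$ would be constant. Therefore Theorem~\ref{t:dist} applies verbatim to $\rr^{-1}$ and yields, for all sufficiently large $n$, a nonempty Zariski open set
\[
\cO_\rr\subseteq\mat{n}^m
\]
on which $\rr^{-1}(X)$ is defined and has $n$ pairwise distinct eigenvalues.

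The next step is to observe that on $\cO_\rr$ the matrix $\rr(X)$ is itself defined, and satisfies $\rr(X)=(\rr^{-1}(X))^{-1}$. This is immediate from the fact that $\dom_n\rr^{-1}\subseteq\dom_n\rr$ together with the identity $\rr\cdot\rr^{-1}=1$ in $\rx$: evaluating this identity at any $X\in\cO_\rr$ shows that $\rr(X)$ is invertible with inverse $\rr^{-1}(X)$. Consequently, if $\lambda_1,\dots,\lambda_n$ are the eigenvalues of $\rr^{-1}(X)$, they are automatically nonzero (since $\rr^{-1}(X)$ is invertible), and the eigenvalues of $\rr(X)$ are precisely $\lambda_1^{-1},\dots,\lambda_n^{-1}$, which are $n$ distinct nonzero elements of $\kk$.

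There is no real obstacle here: the entire content of the corollary is packaged in the observation that passing from $\rr$ to $\rr^{-1}$ converts the requirement ``nonzero eigenvalues of $\rr(X)$'' into the requirement ``$\rr^{-1}(X)$ is defined'', and the distinctness of eigenvalues is preserved under inversion. Openness of $\cO_\rr$ in $\mat{n}^m$ is inherited directly from the openness of the analogous set for $\rr^{-1}$ in Theorem~\ref{t:dist}. This completes the proposal.
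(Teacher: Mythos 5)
Your proposal has a genuine gap. The key intermediate claim, $\dom_n\rr^{-1}\subseteq\dom_n\rr$, is false in general. As a concrete counterexample, take $\rr=x_1^{-1}$, so $\rr^{-1}=x_1$: then $\dom_n\rr^{-1}=\mat{n}^m$ while $\dom_n\rr=\{X : X_1\in\GL{n}\}$, so the inclusion fails. Because of this, the conclusion that $\rr^{-1}(X)$ is invertible on your set $\cO_\rr$ does not follow. Indeed, having $n$ pairwise distinct eigenvalues does not preclude one of them being $0$; if $\rr^{-1}(X)$ has $0$ as an eigenvalue, then $\rr(X)$ is not defined at all, and your circle (using $\rr\cdot\rr^{-1}=1$ to argue invertibility, which in turn was needed to make sense of evaluating both factors) never closes. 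For instance, with $\rr=(1+x_1)^{-1}$, $\rr^{-1}=1+x_1$, and $\rr^{-1}(X)$ has $n$ distinct eigenvalues whenever $X_1$ does, yet $0$ may well be among them.

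The repair is to intersect your set with the Zariski-open locus where $\rr^{-1}(X)$ is invertible (nonempty because $\rr^{-1}$ is a nonzero element of the division ring $\ud{n}$), which is precisely what makes $\rr(X)=(\rr^{-1}(X))^{-1}$ legitimate. Once you add that step, however, you are doing the same two-set intersection as the paper's own proof — a set with distinct eigenvalues intersected with a set of invertible values — only routed through $\rr^{-1}$ for no gain. The paper applies Theorem~\ref{t:dist} to $\rr$ directly to get the distinct-eigenvalue set, then intersects with the set where $\rr(X)$ is invertible; this is more direct and avoids the domain subtleties altogether.
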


\begin{proof}
Let $n\in\N$ be large enough so that the conclusion of Theorem \ref{t:dist} holds. Then the set $\cO_1\subseteq\mat{n}^m$ where $\rr$ attains values with distinct eigenvalues is Zariski open and nonempty. In particular, $\rr$ is not constantly zero on $\mat{n}^m$; since $\UD_n$ is a division ring, the set $\cO_2\subseteq\mat{n}^m$ where $\rr$ attains invertible values is Zariski open and nonempty. Therefore $\cO_r = \cO_1\cap\cO_2$ is also Zariski open and nonempty. 
\end{proof}

\begin{cor} \label{t:dist2}
Let $f$ be a nonconstant noncommutative polynomial. For all $n$ large enough, the set $\cO_f\subseteq\mat{n}^m$ where $f$ attains values with distinct and nonzero eigenvalues is Zariski open and nonempty.
\\
More precisely, the statement is valid for $n\ge (p-1)(q-1)$ where $p<q$ are primes larger than $\lceil\frac{\deg f}{2}\rceil$.
\end{cor}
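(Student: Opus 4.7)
The plan is to replicate the proofs of Theorem \ref{t:dist} and Corollary \ref{t:dist1}, but to feed in the sharper polynomial-specific bound from Lemma \ref{l:noncent}. Recall that lemma guarantees non-centrality of $f(\gX(n,\gx))$ in $\ud n$ already for $n\ge\lceil\tfrac{\deg f}{2}\rceil+1$, rather than the weaker $n\ge 2\de$ one must use for general noncommutative rational functions. Under the hypothesis of the corollary, the primes $p<q$ satisfy $p,q>\lceil\tfrac{\deg f}{2}\rceil$, so this bound is met at both sizes $p$ and $q$, and Lemma \ref{l:dist_prime} then provides nonempty Zariski open subsets of $\mat{p}^m$ and $\mat{q}^m$ on which $f$ takes values with $p$ (respectively $q$) distinct eigenvalues.

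Next I would repeat the block-diagonal construction from the proof of Theorem \ref{t:dist}. For $n\ge(p-1)(q-1)$, Sylvester's theorem writes $n=ap+bq$ with $a,b\in\N$, and one assembles the generic tuple
\[
\gX:=\gX(p,\gx^{(1)})\oplus\cdots\oplus\gX(p,\gx^{(a)})\oplus\gX(q,\gx^{(a+1)})\oplus\cdots\oplus\gX(q,\gx^{(a+b)})
\]
over the tensor product of the corresponding polynomial rings. The characteristic polynomial of $f(\gX)$ factors as a product of the characteristic polynomials of the individual blocks, each of which has no root in $\kk$ (by Lemma \ref{l:noconsteig}, using non-centrality at sizes $p$ and $q$) and nonzero discriminant (by Lemma \ref{l:dist_prime}). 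Iterated application of Lemma \ref{l:disc} then yields a nonzero discriminant for the full characteristic polynomial, so specializing $\gX$ to an appropriate $X\in\mat{n}^m$ produces $f(X)$ with $n$ distinct eigenvalues. This shows that the set where $f$ attains values with $n$ distinct eigenvalues is a nonempty Zariski open subset of $\mat{n}^m$.

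Finally, to upgrade from ``distinct'' to ``distinct and nonzero'' I would mimic the short argument of Corollary \ref{t:dist1}. Since $f(\gX(n,\gx))$ is non-central in $\ud n$, it is in particular nonzero, and as $\ud n$ is a division algebra it is invertible; hence the set $\cO_2\subseteq\mat{n}^m$ on which $f(X)$ is invertible is Zariski open and nonempty. Intersecting $\cO_2$ with the Zariski open nonempty set $\cO_1$ of $X$ at which $f(X)$ has $n$ distinct eigenvalues, and using irreducibility of $\mat{n}^m$, gives the desired $\cO_f=\cO_1\cap\cO_2$.

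I do not anticipate a genuine obstacle: the proof is essentially a bookkeeping exercise that threads the polynomial bound of Lemma \ref{l:noncent} through the argument of Theorem \ref{t:dist}. The only point requiring a little care is the numerical sanity check that $n\ge(p-1)(q-1)\ge p\ge\lceil\tfrac{\deg f}{2}\rceil+1$, so that Lemma \ref{l:noncent} is indeed applicable at size $n$ as well as at the auxiliary prime sizes.
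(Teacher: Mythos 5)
Your proposal is correct and follows essentially the same route as the paper: the paper's proof is simply the one-line observation that the polynomial-specific bound in Lemma \ref{l:noncent} should be substituted for the general one at the start of the proof of Theorem \ref{t:dist}, and you have spelled out exactly that substitution together with the block-diagonal and discriminant bookkeeping and the invertibility upgrade from Corollary \ref{t:dist1}. The numerical check that $(p-1)(q-1)\ge p\ge\lceil\frac{\deg f}{2}\rceil+1$ is the right sanity check and does hold.
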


\begin{proof}
The improved lower bound on $n$ follows from using the part of Lemma \ref{l:noncent} referring to noncommutative polynomials at the beginning of the proof of Theorem \ref{t:dist}.
\end{proof}

\begin{rem}\label{rrr}
 Recall that a polynomial $f\in\px$ is said to be 
 a polynomial identity of $\mat{n}$
 if $f(X)=0$ for every $X\in \mat{n}^m$.
 If 
 $f(X)$ is always a scalar matrix, but $f$ is not a polynomial identity, then $f$ is said to be 
  central for $\mat{n}$. It is obvious 
  that the conclusion of Corollary \ref{t:dist2} does not hold for polynomial identities and central polynomials. One may wonder whether these are the only polynomials that have to be excluded, and, accordingly, whether the assumption that $n$ must be large enough  is too restrictive. This  is not the case.
Recall further that
$g\in\px$ is $2$-central for $\mat{n}$ if $g^2$ is central for $\mat{n}$, but $g$ is not. For example, $[x_1,x_2]$ is 2-central for 
$\mat{2}$. It is known that $2$-central polynomials for 
$\mat{n}$  exist for various large (even)  $n$ \cite[Proposition 1]{KBMR}. Observe that the evaluations of a  2-central polynomial have at most two distinct eigenvalues.
We may also consider more general $k$-central polynomials, as well sums of these and central  polynomials. 
This justifies the assumption in Corollary \ref{t:dist2}
(and hence also in Theorem \ref{t:dist})
that $n$ must be large enough. On the other hand, this assumption is superfluous for multilinear polynomials \cite[Theorem 3]{KBMR}.  \end{rem}

\section{The linear Waring problem} \label{s3}

We first record an elementary lemma, which was implicitly already noticed in \cite{BS}.

\begin{lem}\label{lnova}
    Let $D\in \mat{n}$ be a matrix with $n$ distinct eigenvalues $\lambda_1,\dots, \lambda_n$.
    Then every matrix in $\slie{n}$ is a difference of two matrices similar  to $D$.
\end{lem}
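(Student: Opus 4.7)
The plan is to reduce to the case where $D=\mathrm{diag}(\lambda_1,\dots,\lambda_n)$ and $A$ has zero diagonal, and then to build $D_1$ and $D_2$ explicitly as a pair of triangular matrices sharing the diagonal $(\lambda_1,\dots,\lambda_n)$.

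First, I would observe that the conclusion is invariant under simultaneous conjugation: if $A=D_1-D_2$ with each $D_i$ similar to $D$, then $PAP^{-1}=(PD_1P^{-1})-(PD_2P^{-1})$ and each $PD_iP^{-1}$ is still similar to $D$. Hence I may replace $D$ by any conjugate (say $D=\mathrm{diag}(\lambda_1,\dots,\lambda_n)$) and independently replace $A$ by any similar matrix. By a classical theorem of Fillmore, any non-scalar matrix over $\kk$ is similar to one with any prescribed diagonal whose sum is the trace. Since in characteristic zero a trace-zero scalar matrix must be $0$ (in which case the claim is trivial via $0=D-D$), I may further assume that $A$ has zero diagonal.

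Write $A=A_++A_-$, where $A_+$ is the strictly upper triangular part of $A$ and $A_-$ is the strictly lower triangular part, and set
\[
D_1:=D+A_+,\qquad D_2:=D-A_-.
\]
Then $D_1$ is upper triangular and $D_2$ is lower triangular, each with diagonal $(\lambda_1,\dots,\lambda_n)$. Since the $\lambda_i$ are pairwise distinct, both $D_1$ and $D_2$ have $n$ distinct eigenvalues, are diagonalizable, and are therefore similar to $D$. A direct computation yields $D_1-D_2=A_++A_-=A$, as required.

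The only non-elementary ingredient is the reduction of $A$ to zero diagonal, which Fillmore's theorem handles immediately; a self-contained version via induction on $n$, choosing a vector $v$ with $\{v,Av\}$ linearly independent (available whenever $A$ is non-scalar), is equally short. The rest of the argument is a single explicit triangular decomposition, so I do not expect any serious obstacle.
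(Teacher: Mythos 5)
Your proof is correct and follows essentially the same route as the paper's: reduce to the case where the trace-zero matrix has zero diagonal (the paper cites Horn--Johnson, Section 2.2, Problem 3; you cite Fillmore's theorem, which covers the same reduction), then split into strictly upper and strictly lower triangular parts and add the diagonal $\mathrm{diag}(\lambda_1,\dots,\lambda_n)$ to one and subtract the other to produce two triangular matrices with $n$ distinct eigenvalues, hence similar to $D$. The only cosmetic difference is your explicit handling of the trivial scalar case, which the paper leaves implicit.
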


\begin{proof} 
By \cite[Section 2.2, Problem 3]{HJ}, every matrix in $M\in \slie{n}$  is similar to a matrix all of whose diagonal entries are zero. Therefore $M=Q(N_1-N_2)Q^{-1}$ for some $Q\in\GL{n}$, an upper-triangular matrix $N_1$ with diagonal entries $\lambda_1,\dots,\lambda_n$, and a lower-triangular matrix $N_2$ with diagonal entries $\lambda_1,\dots,\lambda_n$. Observe that $QN_1Q^{-1}$ and $QN_2Q^{-1}$ are similar to $D$.
\end{proof}

Our first application of Theorem \ref{t:dist} is the following theorem.

\begin{thm}\label{c:diff}
For every nonconstant $\rr\in\rx$ and large enough $n\in\N$,
$$\slie{n}\subseteq \rr(\dom_n \rr)-\rr(\dom_n \rr).$$
\end{thm}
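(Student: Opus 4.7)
The plan is to combine the eigenvalue-distinctness result (Theorem \ref{t:dist} / Corollary \ref{t:dist1}) with the elementary linear algebra Lemma \ref{lnova}, using the fact that noncommutative rational functions are equivariant under simultaneous conjugation.

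First, I would fix $n$ large enough so that Theorem \ref{t:dist} applies and choose $X \in \dom_n \rr$ such that $D := \rr(X)$ has $n$ distinct eigenvalues. Next, given any $M \in \slie{n}$, Lemma \ref{lnova} yields a presentation $M = A_1 - A_2$ where each $A_i$ is similar to $D$. Write $A_i = P_i D P_i^{-1}$ for some $P_i \in \GL{n}$.

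The second step is to realize each $A_i$ as a value of $\rr$. Since a noncommutative rational function is a free function, it commutes with simultaneous conjugation: for any $P \in \GL{n}$ and $X \in \dom_n \rr$, the tuple $PXP^{-1} := (PX_1P^{-1}, \dots, PX_mP^{-1})$ lies in $\dom_n \rr$ (this is immediate from the definition of the domain via linear representations, since $\det L(PXP^{-1}) = \det L(X)$ for the affine pencil $L$), and
$$\rr(PXP^{-1}) = P\rr(X)P^{-1}.$$
Applying this to $Y := P_1 X P_1^{-1}$ and $Z := P_2 X P_2^{-1}$, we obtain
$$M = A_1 - A_2 = P_1 \rr(X) P_1^{-1} - P_2 \rr(X) P_2^{-1} = \rr(Y) - \rr(Z),$$
with $Y, Z \in \dom_n \rr$, which is precisely the desired conclusion.

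There is essentially no real obstacle: both ingredients have already been established, and the only thing to verify is the conjugation-equivariance of $\rr$ and its domain, which is a standard (and easy) property of free rational functions. The proof is therefore a short application of Theorem \ref{t:dist} and Lemma \ref{lnova}.
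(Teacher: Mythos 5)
Your proposal is correct and is essentially identical to the paper's proof: both invoke Theorem \ref{t:dist} to obtain $X$ with $\rr(X)$ having $n$ distinct eigenvalues, apply Lemma \ref{lnova} to write $M\in\slie{n}$ as a difference of two conjugates of $\rr(X)$, and then absorb the conjugations into the argument of $\rr$. The only difference is that you spell out the conjugation-equivariance of $\rr$ and its domain, which the paper uses implicitly.
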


\begin{proof}
Given $\rr$, let $n\in\N$ be large enough so that the conclusion of Theorem \ref{t:dist} holds; that is, there exists $X\in\dom_n\rr$ such that $\rr(X)$ has $n$ distinct eigenvalues $\lambda_1,\dots,\lambda_n$.
Take $M\in\slie{n}$.
By Lemma \ref{lnova}, there exist $P_1,P_2\in\GL{n}$ such that
$M=P_1\rr(X)P_1^{-1}-P_2\rr(X)P_2^{-1}$, and hence
$M=\rr(P_1\ XP_1^{-1})-\rr(P_2 XP_2^{-1})$. \end{proof}

As mentioned in the introduction, the special case  of Theorem 
\ref{c:diff}  where $r=f \in\px$ gives a positive answer to  \cite[Question 4.8]{BS}. Observe that if $f$ is a sum of commutators in $\px$, then all matrix evaluations
of $f$ consist of trace zero matrices. 
This justifies the restriction to $\slie{n}$ in the conclusion of Theorem 
\ref{c:diff}. It is natural to ask what can be said if $f$ is not a sum of commutators. To give a (partial) answer, we first record the following lemma.

\begin{lem} \label{led} Let $\lambda_1,\dots,\lambda_n$ be distinct elements in $\mathbb K$ having nonzero sum.
 Then every nonscalar matrix $A\in \mat{n}$ with nonzero trace
can be written as $A= \alpha(D_1 +  D_2)$, where $\alpha\in \mathbb K$ and $D_1,D_2$ are matrices 
similar to ${\rm diag}(\lambda_1,\dots,\lambda_n)$.
\end{lem}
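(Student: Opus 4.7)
My plan is to absorb the trace into the scalar $\alpha$ and then split a conjugate of the rescaled matrix into triangular summands of the prescribed spectrum; the strategy mirrors the proof of Lemma \ref{lnova}.

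First, I would set $s := \sum_{i=1}^n \lambda_i$ (nonzero by hypothesis) and $\alpha := \tr(A)/(2s)$, which is nonzero since $\tr(A) \ne 0$. Setting $B := \alpha^{-1}A$, the matrix $B$ would be nonscalar (since $A$ is) and would satisfy $\tr(B) = 2s = \sum_{i=1}^n 2\lambda_i$. I would then apply the same result used in the proof of Lemma \ref{lnova}, namely \cite[Section 2.2, Problem 3]{HJ}, to obtain $P \in \GL{n}$ for which $C := P^{-1}BP$ has diagonal entries $2\lambda_1, \dots, 2\lambda_n$.

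Finally, I would decompose $C = {\rm diag}(2\lambda_1,\dots,2\lambda_n) + U + L$, with $U$ strictly upper triangular and $L$ strictly lower triangular, and set $N_1 := {\rm diag}(\lambda_1,\dots,\lambda_n) + U$, $N_2 := {\rm diag}(\lambda_1,\dots,\lambda_n) + L$, so that $N_1 + N_2 = C$. Since the $\lambda_i$ are distinct, each $N_j$ is triangular with simple spectrum $\{\lambda_1,\dots,\lambda_n\}$, hence diagonalizable and similar to ${\rm diag}(\lambda_1,\dots,\lambda_n)$; putting $D_j := PN_jP^{-1}$ yields $A = \alpha(D_1 + D_2)$ with $D_1, D_2$ both similar to the target. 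I do not foresee a serious obstacle: the one new ingredient beyond Lemma \ref{lnova} is the rescaling by $\alpha$, which is precisely where the hypotheses $\tr(A) \ne 0$ and $\sum\lambda_i \ne 0$ come in, the former so that $\alpha$ is nonzero and the latter so that $\alpha$ is defined at all.
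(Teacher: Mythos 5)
Your strategy matches the paper's: rescale by $\alpha = \tr(A)/(2\sum\lambda_i)$, realize the rescaled matrix as having prescribed diagonal entries $2\lambda_1,\dots,2\lambda_n$, and split off the strictly upper and lower triangular parts to build two triangular matrices with spectrum $\{\lambda_1,\dots,\lambda_n\}$.

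However, there is one genuine gap: the lemma you cite does not give you what you need. The reference \cite[Section 2.2, Problem 3]{HJ} used in the proof of Lemma \ref{lnova} only shows that a trace-zero matrix is similar to a matrix with all diagonal entries equal to zero (more generally, that any matrix is similar to one with constant diagonal $\tr(A)/n$). What you need here is much stronger: that a \emph{nonscalar} matrix $B$ is similar to a matrix whose diagonal entries are the arbitrary prescribed numbers $2\lambda_1,\dots,2\lambda_n$ (which are distinct, hence certainly not constant), subject only to the trace constraint $\sum 2\lambda_i = \tr(B)$. This is Fillmore's theorem, and the paper invokes it as \cite[Lemma 2.8]{BSb}. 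Note also that this is precisely where the hypothesis that $A$ (hence $B$) is nonscalar is used: a scalar matrix is similar only to itself and so cannot have prescribed non-constant diagonal. Your proof never explains where the nonscalar hypothesis enters; with the correct citation, that becomes clear. Once you replace the \cite{HJ} reference by \cite[Lemma 2.8]{BSb}, your argument is correct and agrees with the paper's proof up to the cosmetic choice of rescaling $A$ to $B$ first rather than dividing the off-diagonal blocks by $\alpha$ afterward.
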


\begin{proof} 
 Let $ \alpha \in\mathbb K$ be such that $2\alpha(\lambda_1+\dots +\lambda_n)$ is equal to the trace of $A$.  By \cite[Lemma 2.8]{BSb},  there exists a matrix which is similar to $A$ and whose diagonal entries are $2\alpha \lambda_1,\dots,  2\alpha \lambda_n$. 
Therefore, there is no loss of generality in assuming that 
$2\alpha \lambda_1,\dots, 2 \alpha \lambda_n$
 are the diagonal entries of $A$. 
Write $$A=  {\rm diag}(2\alpha \lambda_1,\dots,  2\alpha \lambda_n) + U+ L,$$ where $U$ is a strictly upper triangular matrix and $L$ is a strictly lower diagonal matrix. We remark that $\alpha\ne 0$ since $A$ has nonzero trace. Note that the matrices 
$$D_1= {\rm diag}(\lambda_1,\dots,\lambda_n) + \alpha^{-1} U,\quad D_2= {\rm diag}(\lambda_1,\dots,\lambda_n) + \alpha^{-1}L$$ are similar to ${\rm diag}(\lambda_1,\dots,\lambda_n)$ and $A= \alpha(D_1 +  D_2)$.
\end{proof}

\begin{rem} The assumption that  $A$ is nonscalar is necessary.  In fact, the identity
$I$  is not always a linear combination of two matrices similar to ${\rm diag}(\lambda_1,\dots,\lambda_n)$.
Indeed, $I=\beta D_1 + \gamma D_2$ implies that the  spectrum of $I-\beta D_1$ is equal to the spectrum of $\gamma D_2$, that is,
$\{1-\beta \lambda_1,\dots, 1-\beta \lambda_n\}=\{\gamma \lambda_1,\dots, \gamma\lambda_n\}$.
It is easy to find $\lambda_i$'s for which there are  no such $\beta,\gamma$.
\end{rem}

Let $f\in\px$.
We simplify the notation and write
$f(\mat{n})$ for $f(\mat{n}^m)$.
More generally, 
for any algebra $\mathcal A$, we   
 write $f(\mathcal A)$ for $f(\mathcal A^m)$. We call
$f(\mathcal A)$  the image of $f$ in $\mathcal A$.
Note that $f(\mathcal A)$ is closed under conjugation by invertible elements in $\mathcal A$.

\begin{thm}\label{c23}
If a polynomial $f$ is not a sum of commutators, then, for $n$ large enough, every nonscalar matrix in $\mat{n}$ is a linear combination of two matrices from $f(\mat{n})$.
\end{thm}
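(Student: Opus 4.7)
The approach is to split according to whether $\tr(A)=0$: the trace-zero case follows directly from Theorem \ref{c:diff}, while the nonzero-trace case is reduced to Lemma \ref{led} after securing an evaluation of $f$ with $n$ distinct eigenvalues of nonzero sum.

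If $A\in\mat{n}$ is nonscalar with $\tr(A)=0$, then $A\in\slie{n}$, so Theorem \ref{c:diff} applied with $\rr=f$ produces $Y,Z\in\mat{n}^m$ with $A=f(Y)-f(Z)=1\cdot f(Y)+(-1)\cdot f(Z)$, a linear combination of two matrices in $f(\mat{n})$.

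For the nonzero-trace case, the key step is to find, for $n$ large enough, a tuple $Y\in\mat{n}^m$ such that $f(Y)$ has $n$ distinct eigenvalues $\lambda_1,\dots,\lambda_n$ with $\sum_i\lambda_i\neq 0$. By Corollary \ref{t:dist2}, the set $\cO_f\subseteq\mat{n}^m$ on which $f$ attains values with $n$ distinct (nonzero) eigenvalues is a nonempty Zariski open set for all $n$ sufficiently large. Separately, the hypothesis that $f$ is not a sum of commutators means, by a classical result, that there exists some $n_0$ and $Y_0\in\mat{n_0}^m$ with $\tr(f(Y_0))\neq 0$; a direct-sum argument (using $\tr(f(Y_0\oplus Z))=\tr(f(Y_0))+\tr(f(Z))$ together with Zariski density, varying both $Y_0$ and $Z$ if necessary to dodge the constant term of $f$) extends this to all $n\ge n_0$, showing $\tr(f(\cdot))\not\equiv 0$ on $\mat{n}^m$. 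Since $\mat{n}^m$ is irreducible, the intersection of $\cO_f$ with the complement of the vanishing locus of $\tr\circ f$ is nonempty and yields the desired $Y$.

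Once $Y$ is fixed, Lemma \ref{led} produces, for every nonscalar $A$ with $\tr(A)\neq 0$, a scalar $\alpha\in\kk$ and matrices $D_1,D_2$ similar to $\mathrm{diag}(\lambda_1,\dots,\lambda_n)$, and hence similar to $f(Y)$, such that $A=\alpha(D_1+D_2)$. Writing $D_i=P_if(Y)P_i^{-1}=f(P_iYP_i^{-1})$ for some $P_i\in\GL{n}$ completes the argument. The main obstacle I anticipate is the trace-nonvanishing step: one must invoke (or reprove) the classical fact that ``not a sum of commutators'' is detectable by the generic trace at some finite matrix size, from which the propagation to all large $n$ via direct sums is routine.
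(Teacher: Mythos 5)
Your proposal is correct and follows essentially the same route as the paper: the ``classical result'' you invoke is \cite[Corollary 4.7]{BK}, and the rest (intersecting the Zariski-open set from Corollary \ref{t:dist2} with the nonvanishing locus of the generic trace, then splitting by trace and applying Lemma \ref{led} and Theorem \ref{c:diff} / Lemma \ref{lnova}) matches the paper's argument. The only difference is that the paper propagates trace-nonvanishing from $n_0$ to all larger $n$ more cleanly, by splitting into the cases of zero versus nonzero constant term of $f$, rather than the slightly hand-wavy ``vary $Y_0$ and $Z$ to dodge cancellation'' you suggest.
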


\begin{proof} 
We claim that for $n$ large enough, $f(\mat{n})$ contains a matrix with nonzero trace. 
Since $f$ is not a sum of commutators,  \cite[Corollary 4.7]{BK} tells us that there exists a natural number $n_0$ such that
$f(\mat{n_0})$ contains a matrix with nonzero trace. If $f$ has constant term zero,  this readily implies that
the same is true for $f(\mat{n})$ for every $n\ge n_0$. If, however, $f$ has a nonzero constant term, then obviously 
 $f(0,\dots,0)$ has  nonzero trace for each $n$.  

Thus, for $n$ large enough, the tuples in $\mat{n}^m$ in which $f$ attains a matrix with nonzero trace form  a nonempty Zariski open subset of $\mat{n}^m$. Corollary \ref{t:dist2} shows that for $n$ large enough, the  tuples in $\mat{n}^m$ in which $f$ attains a matrix with $n$ distinct eigenvalues also form  a nonempty Zariski open subset of $\mat{n}^m$. Their intersection is therefore nonempty. Thus, for $n$ large enough, $f(\mat{n})$ contains a matrix $D$
with nonzero trace and $n$ distinct eigenvalues.
Since $f(\mat{n})$ is closed under conjugation with invertible matrices, we may assume that $D$ is a diagonal matrix.
  Lemma \ref{led} therefore shows that every nonscalar matrix 
in $\mat{n}$ having nonzero trace is a scalar multiple of the sum of two matrices similar to $D$. By Lemma \ref{lnova}, every trace zero matrix in $\mat{n}$ is a difference of two matrices similar to $D$.
Since matrices similar to $D$ belong to $f(\mat{n})$, this proves the corollary.
\end{proof}

\begin{rem}
Taking a polynomial like $f=1+[x,y]$ we see that one really has to involve linear combinations, i.e., sums and differences are insufficient.\end{rem}

 If $f$ attains a nonzero value in scalars (i.e., $f(\alpha_1,\dots,\alpha_m)\ne 0$ for some $\alpha_j\in\mathbb K$), in particular if $f$ has nonzero constant term,
 then we can omit ``nonscalar" in the statement of Theorem \ref{c23}. We 
 leave the general case as an open problem.
 
\begin{question}\label{q36}
    Does the conclusion of Theorem \ref{c23} also hold for scalar matrices?
\end{question}

 What can be said is the following.

\begin{cor}\label{c23a}
If a polynomial $f$ is not a sum of commutators, then, for $n$ large enough, every  matrix in $\mat{n}$ is a linear combination of three matrices from $f(\mat{n})$.
\end{cor}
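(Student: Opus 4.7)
The plan is to deduce Corollary \ref{c23a} directly from Theorem \ref{c23}, with the only new ingredient being how to handle scalar matrices (the case excluded from the conclusion of Theorem \ref{c23}). Any linear combination of two matrices from $f(\mat{n})$ is trivially a linear combination of three (just append any third element of $f(\mat{n})$ with coefficient $0$), so for $n$ large enough the \emph{nonscalar} case of Corollary \ref{c23a} follows immediately from Theorem \ref{c23}.

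For a scalar matrix $\lambda I$, the idea is to translate by some $D\in f(\mat{n})$ in order to reduce to the nonscalar case. By Corollary \ref{t:dist2}, for $n$ large enough $f(\mat{n})$ contains a matrix $D$ with $n$ distinct eigenvalues; in particular $D$ is nonscalar (once $n\ge 2$), and hence so is $\lambda I-D$. Applying Theorem \ref{c23} to this nonscalar matrix, one writes
$$\lambda I-D=\alpha_1 A_1+\alpha_2 A_2,\qquad A_1,A_2\in f(\mat{n}),\ \alpha_1,\alpha_2\in\kk.$$
Rearranging gives $\lambda I=D+\alpha_1 A_1+\alpha_2 A_2$, which exhibits $\lambda I$ as a linear combination of three matrices from $f(\mat{n})$.

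There is no real obstacle in this argument: Corollary \ref{c23a} is essentially a one-line translation argument on top of Theorem \ref{c23}, using the existence of a nonscalar element of $f(\mat{n})$ guaranteed by Corollary \ref{t:dist2}. What is worth noting is that this translation trick cannot reduce the count from three to two, since doing so would require $\lambda I$ itself to be a linear combination of two matrices from $f(\mat{n})$ — precisely the content of the open Question \ref{q36}.
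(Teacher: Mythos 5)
Your proof is correct, but it follows a different path from the paper's. The paper argues uniformly: from the first step of the proof of Theorem~\ref{c23}, $f(\mat{n})$ contains a matrix $A$ with nonzero trace; since $A$ has nonzero trace, every $M\in\mat{n}$ decomposes as $M=\beta A+T$ with $T$ traceless for suitable $\beta\in\kk$; by Theorem~\ref{c:diff}, $T\in f(\mat{n})-f(\mat{n})$, giving $M$ as a linear combination of three elements of $f(\mat{n})$ in one stroke. You instead deduce everything from Theorem~\ref{c23}: the nonscalar case is immediate, and the scalar case $\lambda I$ is handled by translating by a nonscalar $D\in f(\mat{n})$ (which exists by Corollary~\ref{t:dist2}) so that $\lambda I-D$ is nonscalar and Theorem~\ref{c23} applies. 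Both arguments are valid and yield the same bound on $n$ (take the maximum of the relevant thresholds). The paper's version is slightly leaner in that it has no case split and does not need to reinvoke Corollary~\ref{t:dist2}, relying only on Theorem~\ref{c:diff} and the ``nonzero trace'' step already isolated inside the proof of Theorem~\ref{c23}; your version stays strictly downstream of Theorem~\ref{c23} as a black box, which makes the ``why three and not two'' obstruction, and its link to Question~\ref{q36}, more visible.
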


\begin{proof}
    As noticed at the beginning of the proof of Theorem \ref{c23}, $f(\mat{n})$ contains a matrix  $A$ with nonzero trace. Since every matrix in $\mat{n}$ is a linear combination of $A$ and a traceless matrix, the desired conclusion follows from  Theorem \ref{c:diff}.
\end{proof}

In view of Theorem \ref{c:diff}, Question \ref{q36} can be equivalently asked as follows: Given any nonconstant polynomial $f$, is the set of linear combinations of two matrices from $f(\mat{n})$ a vector space for all large enough $n$? We remark that 
\cite[Theorem 1.2]{BS} shows that the restriction to large enough $n$ is necessary here.

In our last theorem of this section we will consider algebras more general than $\mat{n}$.
Here, by an algebra we mean a unital, associative and not necessarily commutative algebra over $\mathbb K$.
We first record an elementary lemma.

\begin{lem} \label{l31}Let $\mathcal A$ be an algebra, let $\lambda_1,\dots,\lambda_m$ be distinct elements in $\mathbb K$,
and let $d={\rm diag}(\lambda_1 1,\dots,\lambda_m 1)\in \opm_m(\mathcal A)$, where  $1$ stands for the unity of $\mathcal A$. If $b$ is an upper triangular  (resp.\ lower triangular) matrix in $\opm_m(\mathcal A)$  with the same diagonal as $d$, then there exists an upper triangular (resp.\ lower triangular) matrix $t\in \opm_m(\mathcal A)$ with $1$'s on the diagonal such that $b=t d t^{-1}$. 
\end{lem}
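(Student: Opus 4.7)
The plan is to reduce the conjugation equation $b = tdt^{-1}$ to a triangular system of entry equations that can be solved by a direct recursion in the entries of $t$. I would handle the upper triangular case first, and then indicate that the lower triangular case is entirely analogous.

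Write $t = I + s$ with $s$ strictly upper triangular, and $b = d + u$ with $u$ strictly upper triangular. Since $s$ is nilpotent, $t = I + s$ is automatically invertible and its inverse is again upper triangular with $1$'s on the diagonal, so the identity $b = tdt^{-1}$ is equivalent to $bt = td$, which after expansion collapses to
\[
sd - ds \;=\; u + us.
\]
Because each $\lambda_i \cdot 1$ is central in $\opm_m(\mathcal A)$, extracting the $(i,j)$-entry for $i<j$ yields
\[
(\lambda_j - \lambda_i)\,s_{ij} \;=\; u_{ij} + \sum_{i<k<j} u_{ik}\,s_{kj},
\]
where the sum ranges only over strictly intermediate indices $k$ thanks to the strict triangularity of $u$ and $s$.

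Since the $\lambda_i$'s are distinct elements of $\kk$, each scalar $\lambda_j - \lambda_i$ (for $i\neq j$) is invertible in $\kk$, so the displayed entry equation can be solved for $s_{ij}$ by induction on $j-i$: for $j - i = 1$ the right-hand side is simply $u_{i,i+1}$, and in the inductive step every $s_{kj}$ appearing on the right satisfies $j - k < j - i$ and is therefore already determined. This produces the desired strictly upper triangular $s$, and $t = I+s$ has all the required properties.

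The lower triangular case follows by the same argument with the two triangles interchanged and the recursion run on $i-j$ instead (alternatively, one can conjugate by the antidiagonal permutation matrix, which swaps the two kinds of triangularity and sends $d$ to a diagonal matrix with the same set of distinct scalar entries). No genuine obstacle is anticipated here; the only point that requires care is that the entries $\lambda_i \cdot 1$ are central in $\opm_m(\mathcal A)$, which is precisely what allows $\lambda_j - \lambda_i$ to be factored out as a scalar in each entry equation even though $\mathcal A$ need not be commutative.
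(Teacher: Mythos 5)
The paper itself does not provide a proof; it merely remarks that the lemma ``can be easily proved by induction on $m$ or directly'' and leaves the details to the reader. Your argument is a complete instance of the ``direct'' route: writing $t = I+s$, $b = d+u$ with $s,u$ strictly upper triangular, the conjugation relation becomes the linear equation $bt = td$, which in entries reads
\[
(\lambda_j-\lambda_i)\,s_{ij} \;=\; u_{ij} + \sum_{i<k<j} u_{ik}\,s_{kj},\qquad i<j,
\]
and you correctly observe that this can be solved recursively in increasing $j-i$ because every $s_{kj}$ on the right has strictly smaller superdiagonal index, while each scalar $\lambda_j-\lambda_i$ is a nonzero element of $\kk$, hence invertible and central in $\opm_m(\mathcal A)$. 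The invertibility of $t=I+s$ via nilpotency of $s$, the handling of noncommutativity (only the centrality of the scalars $\lambda_i\cdot 1$ is used), and the reduction of the lower triangular case by symmetry or conjugation by the antidiagonal permutation are all sound. This is a valid, self-contained proof of what the paper asserts without proof; it is arguably more explicit than the ``induction on $m$'' route the authors suggest, since that would instead peel off the last row and column and solve a smaller conjugation problem, whereas your recursion solves directly for all entries of $t$ at once, superdiagonal by superdiagonal.
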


The lemma can be easily proved  by induction on $m$ or directly. We leave the details to the reader.

Recall that a matrix over a field has trace zero if and only if it is a commutator of two matrices \cite{AM}. When considering 
matrices over algebras that are not fields, 
the role of trace zero matrices are therefore played by commutators.

\begin{thm}\label{cbx} Let $\mathcal A$ be an algebra over $\mathbb K$ such that $\mathcal A\cong \opm_2(\mathcal A)$.  If $f$ is a nonconstant polynomial, then every commutator in $\mathcal A$ is a sum of seven elements from $f(\mathcal A )-f(\mathcal A)$.
\end{thm}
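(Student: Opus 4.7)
The plan is to transfer the statement to a matrix problem via the iterated isomorphism $\mathcal{A}\cong\opm_{2^k}(\mathcal{A})$ (obtained by iterating $\mathcal{A}\cong\opm_2(\mathcal{A})$). I would choose $k$ so that $n:=2^k$ is large enough for Corollary \ref{t:dist2} to apply to $f$, and let $\phi\colon\mathcal{A}\to\opm_n(\mathcal{A})$ be the resulting isomorphism. Then $\phi$ identifies $f(\mathcal{A})$ with $f(\opm_n(\mathcal{A}))$ and sends a commutator $c=[u,v]\in\mathcal{A}$ to the matrix commutator $C:=[\phi(u),\phi(v)]$, so it suffices to show that every $C\in[\opm_n(\mathcal{A}),\opm_n(\mathcal{A})]$ is a sum of seven elements of $f(\opm_n(\mathcal{A}))-f(\opm_n(\mathcal{A}))$.

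Next, I would apply Corollary \ref{t:dist2} to $m$-tuples in $\mat{n}^m\subseteq\opm_n(\mathcal{A})^m$ and conjugate to arrange that $D:=f(X)=\mathrm{diag}(\lambda_1,\dots,\lambda_n)$ for pairwise distinct nonzero $\lambda_i\in\kk$. Since $f(\opm_n(\mathcal{A}))$ is closed under conjugation by $\opm_n(\mathcal{A})^\times$, Lemma \ref{l31} implies that every upper- or lower-triangular matrix in $\opm_n(\mathcal{A})$ whose diagonal is $(\lambda_1,\dots,\lambda_n)$ lies in $f(\opm_n(\mathcal{A}))$. Writing $C=C_{+}+C_{d}+C_{-}$ for the decomposition into its strictly upper-triangular, diagonal, and strictly lower-triangular parts, the identities $C_{+}=(D+C_{+})-D$ and $C_{-}=D-(D-C_{-})$ realize each of $C_{\pm}$ as a single element of $f(\opm_n(\mathcal{A}))-f(\opm_n(\mathcal{A}))$. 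This accounts for two of the seven needed differences; in particular, for every $a\in\mathcal{A}$ and $i\neq j$, the single-entry matrix $ae_{ij}$ is itself one such difference.

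The remaining five differences have to cover the diagonal part $C_{d}=\mathrm{diag}(a_1,\dots,a_n)$, which by the commutator hypothesis satisfies $\sum_i a_i\in[\mathcal{A},\mathcal{A}]$. This will be the main obstacle: for general $a\in\mathcal{A}\setminus\kk$ the matrix $D+ae_{ii}$ is not conjugate to $D$ in $\opm_n(\mathcal{A})$, so the trick used for $C_{\pm}$ does not apply. I would handle $C_{d}$ by a bookkeeping argument that trades controlled strictly off-diagonal corrections (each cheap by the previous paragraph) for diagonal adjustments via matrix-unit identities such as $a(e_{ii}-e_{jj})=[ae_{ij},e_{ji}]$, and then closes out the residual scalar-trace piece by invoking Theorem \ref{c:diff} inside $\mat{n}\subseteq\opm_n(\mathcal{A})$, which supplies $\slie{n}$ as a single difference. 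A careful count of these corrections should give the bound of five additional differences, for seven in total.
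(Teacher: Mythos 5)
Your opening moves are the same as the paper's: iterate $\mathcal{A}\cong\opm_2(\mathcal{A})$ to reach $\opm_n(\mathcal{A})$ with $n=2^k$ large, invoke Corollary \ref{t:dist2} to get a diagonal $D=\mathrm{diag}(\lambda_1,\dots,\lambda_n)\in f(\mat{n})$ with distinct $\lambda_i$, and use Lemma \ref{l31} together with closure under conjugation to conclude that every matrix in $\opm_n(\mathcal{A})$ with zero diagonal lies in $f(\opm_n(\mathcal{A}))-f(\opm_n(\mathcal{A}))$. (Note in passing that $C_+ +C_-$ is already \emph{one} such difference, so you are wasting a slot by treating $C_+$ and $C_-$ separately.) The genuine gap is in how you handle the diagonal part $C_d=\mathrm{diag}(a_1,\dots,a_n)$.

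First, your proposed ``bookkeeping via $a(e_{ii}-e_{jj})=[ae_{ij},e_{ji}]$'' telescopes $C_d$ into roughly $n-1$ such terms (plus a residual), and each would have to be paid for separately; the count is not five but grows with $n$, so the bound of seven cannot come out of this scheme for arbitrary $n$. Second, your ``residual scalar-trace piece'' is not a scalar: it is $\bigl(\sum_i a_i\bigr)e_{nn}$ with $\sum_i a_i\in[\mathcal{A},\mathcal{A}]\subseteq\mathcal{A}$, a general element of the algebra, whereas Theorem \ref{c:diff} only supplies the $\kk$-vector space $\slie{n}\subseteq\mat{n}$; it gives you nothing toward a single entry $ae_{nn}$ with $a\in\mathcal{A}\setminus\kk$. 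The paper's way around both problems is a step you omit entirely: after establishing that zero-diagonal matrices over $\opm_{2^{n-1}}(\mathcal{A})$ are single differences, it rewrites $\opm_{2^n}(\mathcal{A})\cong\opm_2(\opm_{2^{n-1}}(\mathcal{A}))\cong\opm_2(\mathcal{A})$ and works with a commutator $k=[P,Q]$ of two \emph{$2\times2$} matrices over $\mathcal{A}$. In that picture the diagonal of $k$ has only two entries, each an explicit four-term expression in the entries of $P,Q$, and the paper realizes this diagonal part as a sum of six conjugates of elements of the off-diagonal subspace $\mathcal{S}$ (each conjugate being $\bigl[\begin{smallmatrix}1&y\\0&1\end{smallmatrix}\bigr]\bigl[\begin{smallmatrix}0&0\\x&0\end{smallmatrix}\bigr]\bigl[\begin{smallmatrix}1&-y\\0&1\end{smallmatrix}\bigr]$ for suitable $x,y\in\mathcal{A}$, contributing $yx$ and $-xy$ to the two diagonal slots). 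Together with one more difference for the off-diagonal part, this gives exactly seven. Without the reduction to the $2\times2$ commutator structure, there is no bounded bookkeeping, so the proposal does not close.
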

\begin{proof} Observe that $\mathcal A\cong \opm_2(\mathcal A)$ implies that $\mathcal A\cong \opm_{2 ^n}(\mathcal A)$ for every positive integer $n$. By Corollary  \ref{t:dist2}, there exists a positive integer $n$ such that $f(\mat{2^n}) $ contains a diagonal matrix  diag$(\lambda_1,\dots,\lambda_{2^n})$ with distinct $\lambda_i$. In light of the natural embedding of $\mat{2^n}$ into 
$\opm_{2^n}(\mathcal A)$, this implies that $f(\opm_{2 ^n}(\mathcal A))$  contains the diagonal matrix
 $$d={\rm diag}(\lambda_1 1,\dots,\lambda_{2^n} 1)\in \opm_{2 ^n}(\mathcal A).$$

 Since $f(\opm_{2 ^n}(\mathcal A))$ is closed under conjugation with invertible elements, Lemma \ref{l31} shows that $f(\opm_{2 ^n}(\mathcal A))$ contains all matrices of the form $d + u$  where $u$ is  a strictly upper triangular matrix in $\opm_{2 ^n}(\mathcal A)$, as well as all matrices of the form $d+\ell$ where $\ell$ is a  strictly lower triangular matrix in $\opm_{2 ^n}(\mathcal A)$. Therefore, $$u-\ell\in f(\opm_{2 ^n}(\mathcal A))-f(\opm_{2 ^n}(\mathcal A)),$$ meaning that $f(\opm_{2 ^n}(\mathcal A))-f(\opm_{2 ^n}(\mathcal A))$ contains all matrices in  $\opm_{2 ^n}(\mathcal A)$ that have zeros on the diagonal. In particular, all matrices $\left[ \begin{smallmatrix} 0 & a  \cr b&0 \cr \end{smallmatrix} \right]$ with
$a,b\in \opm_{2 ^{n-1}}(\mathcal A)$ are  contained in $f(\opm_{2 ^n}(\mathcal A))-f(\opm_{2 ^n}(\mathcal A))$. 

In view of our assumption on $\mathcal A$, we may replace   $\opm_{2 ^{n-1}}(\mathcal A)$ by $\mathcal A$ in the last statement.  Thus,
$f(\opm_2(\mathcal A)) - f(\opm_2(\mathcal A))$ contains the subspace $\mathcal S$ of $\opm_2(\mathcal A)$ consisting of matrices $\left[ \begin{smallmatrix} 0 & a  \cr b&0 \cr \end{smallmatrix} \right]$ with $a,b\in \mathcal A$. 
Take  a commutator $k\in \opm_2(\mathcal A)$. We have
\begin{align*} k=&\left[ \begin{matrix} a & b  \cr c&d \cr \end{matrix} \right]\left[ \begin{matrix} e & f  \cr g&h \cr \end{matrix} \right] - \left[ \begin{matrix} e & f  \cr g&h \cr \end{matrix} \right] \left[ \begin{matrix} a & b  \cr c&d \cr \end{matrix} \right]\\
=& \left[ \begin{matrix} ae - ea + bg  - fc & 0 \cr 0& dh-hd -gb +cf\cr \end{matrix} \right] + s\end{align*}
where $s\in \mathcal S$.
Using that 
$\left[ \begin{smallmatrix} 1 & x  \cr 0&1 \cr \end{smallmatrix} \right]^{-1} =\left[ \begin{smallmatrix} 1 & -x \cr 0&1 \cr \end{smallmatrix} \right]$ for every $x\in \mathcal A$, it follows that
\begin{align*} k
=& \left[ \begin{matrix} 1 & 1 \cr 0&1 \cr \end{matrix} \right] \left[ \begin{matrix} 0 & 0  \cr ae&0 \cr \end{matrix}
 \right] \left[ \begin{matrix} 1 & 1 \cr 0&1 \cr \end{matrix} \right]^{-1} + 
\left[ \begin{matrix} 1 & -e  \cr 0&1 \cr \end{matrix} \right] \left[ \begin{matrix} 0 & 0  \cr a&0 \cr \end{matrix} \right] \left[ \begin{matrix} 1 & -e  \cr 0&1 \cr \end{matrix} \right]^{-1} \\
+ & 
\left[ \begin{matrix} 1 & b  \cr 0&1 \cr \end{matrix} \right] \left[ \begin{matrix} 0 & 0  \cr g&0 \cr \end{matrix} \right] \left[ \begin{matrix} 1 & b  \cr 0&1 \cr \end{matrix} \right]^{-1} +
\left[ \begin{matrix} 1 & -f  \cr 0&1 \cr \end{matrix} \right] \left[ \begin{matrix} 0 & 0  \cr c&0 \cr \end{matrix} \right] \left[ \begin{matrix} 1 & -f  \cr 0&1 \cr \end{matrix} \right]^{-1}\\
+ & \left[ \begin{matrix} 1 & -1 \cr 0&1 \cr \end{matrix} \right] \left[ \begin{matrix} 0 & 0  \cr dh&0 \cr \end{matrix}
 \right] \left[ \begin{matrix} 1 & -1 \cr 0&1 \cr \end{matrix} \right]^{-1} + 
\left[ \begin{matrix} 1 & d  \cr 0&1 \cr \end{matrix} \right] \left[ \begin{matrix} 0 & 0  \cr h&0 \cr \end{matrix} \right] \left[ \begin{matrix} 1 & d  \cr 0&1 \cr \end{matrix} \right]^{-1} + s'
\end{align*}
where $s'\in \mathcal S$. Each of the first six terms is a conjugate of a matrix from $\mathcal S$. As $\mathcal S \subseteq f(\opm_2(\mathcal A)) - f(\opm_2(\mathcal A))$ and $f(\opm_2(\mathcal A)) - f(\opm_2(\mathcal A))$ is closed under conjugation, it follows that $k$ is a sum of seven elements from $f(\opm_2(\mathcal A)) - f(\opm_2(\mathcal A))$. Since $\mathcal A\cong \opm_2(\mathcal A)$, this completes the proof.
\end{proof} 

We  point out two cases to which Theorem \ref{cbx} applies. 
The first corollary is a version of \cite[Corollary 3.15]{B} which imposes milder conditions on $\mathbb K$, but in which  the role of seven is played by a certain four-digit number.

\begin{cor}\label{cbx2} Let $V$ be an infinite-dimensional vector space over $\mathbb K$. If $f$ is a nonconstant polynomial, then every element in $\mathcal A={\rm End}_F(V)$ is a sum of seven elements from $f(\mathcal A )-f(\mathcal A)$.
\end{cor}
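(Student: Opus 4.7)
The plan is to reduce Corollary \ref{cbx2} to Theorem \ref{cbx} by verifying the self-similarity hypothesis and then invoking the classical fact that every endomorphism of an infinite-dimensional vector space is a commutator.

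First, I would confirm that $\mathcal A=\mathrm{End}_\kk(V)$ satisfies $\mathcal A\cong\opm_2(\mathcal A)$. Since $V$ is infinite-dimensional, any basis of $V$ can be partitioned into two subsets of equal cardinality, giving $V\cong V\oplus V$ as $\kk$-vector spaces. Therefore
$$\mathcal A=\mathrm{End}_\kk(V)\cong \mathrm{End}_\kk(V\oplus V)\cong \opm_2(\mathrm{End}_\kk(V))=\opm_2(\mathcal A),$$
so the hypothesis of Theorem \ref{cbx} is met. This immediately yields that every commutator in $\mathcal A$ is a sum of seven elements from $f(\mathcal A)-f(\mathcal A)$.

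Second, I would invoke the classical fact that every element of $\mathrm{End}_\kk(V)$ with $V$ infinite-dimensional is a single commutator $T=[A,B]$. In the purely algebraic setting this is provable by elementary means using the further isomorphism $V\cong V^{(\N)}$ (a countable direct sum of copies of $V$), which again follows from infinite-dimensionality: writing an operator on $V^{(\N)}$ in the induced matrix form and performing a telescoping construction with a shift operator produces the desired commutator representation over an arbitrary field. Combining the two steps yields the corollary: every $T\in\mathcal A$ is a commutator, hence a sum of seven elements from $f(\mathcal A)-f(\mathcal A)$.

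The only nonformal input beyond Theorem \ref{cbx} is the universal-commutator statement; once that is granted, the rest is immediate. This is also where the improvement over \cite[Corollary 3.15]{B} comes from, with the small constant seven inherited directly from Theorem \ref{cbx}, and the conditions on $\kk$ being only those required by Theorem \ref{cbx} itself.
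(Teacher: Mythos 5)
Your proof is correct and follows the same two-step structure as the paper: establish $\mathcal A\cong\opm_2(\mathcal A)$ from $V\cong V\oplus V$, invoke Theorem \ref{cbx}, and then upgrade the conclusion from commutators to all of $\mathcal A$ via the fact that every endomorphism of an infinite-dimensional vector space is a single commutator. The paper simply cites Mesyan (\cite[Proposition 12]{M}) for the latter fact where you sketch a shift-operator argument, but the reasoning is the same.
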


\begin{proof}
From  $V \cong V\oplus V$ we obtain $\mathcal A\cong \opm_2(\mathcal A)$. Since every element in $\mathcal A$ is a commutator \cite[Proposition 12]{M},
the desired conclusion follows from Theorem \ref{cbx}.
\end{proof}

By $B(X)$ we denote the algebra of all continuous linear operators on a Banach space $X$.
If $X\cong X\oplus X$, then $\mathcal A=B(X)$ satisfies $\mathcal A\cong \opm_2(\mathcal A)$. Thus the following corollary holds.

\begin{cor}\label{cbx3} Let $X$ be a
Banach space such that $X\cong X\oplus X$. If $f$ is a nonconstant polynomial, then every commutator in $\mathcal A=B(X)$ is a sum of seven elements from $f(\mathcal A)-f(\mathcal A)$.
\end{cor}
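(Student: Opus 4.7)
The plan is to reduce the statement to a direct application of Theorem \ref{cbx}, so essentially all the work lies in verifying the algebraic hypothesis $\mathcal A \cong \opm_2(\mathcal A)$ for $\mathcal A = B(X)$.

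First, I would fix a topological isomorphism $\varphi \colon X \to X \oplus X$ (which exists by hypothesis). Conjugation by $\varphi$ provides an algebra isomorphism $B(X) \cong B(X \oplus X)$, namely $T \mapsto \varphi T \varphi^{-1}$. This part is immediate from the definition of $\oplus$ being compatible with the Banach space structure.

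Second, I would exhibit the standard identification $B(X \oplus X) \cong \opm_2(B(X))$. Using the canonical projections $\pi_i\colon X\oplus X\to X$ and inclusions $\iota_j\colon X\to X\oplus X$ for $i,j\in\{1,2\}$, any $S\in B(X\oplus X)$ corresponds to the matrix $(\pi_i S \iota_j)_{i,j=1}^2\in \opm_2(B(X))$, with the inverse map sending a matrix $(T_{ij})$ to $\sum_{i,j}\iota_i T_{ij}\pi_j$. A short verification shows this is an algebra isomorphism. Composing with the previous step gives $\mathcal A\cong \opm_2(\mathcal A)$, which is the hypothesis of Theorem \ref{cbx}.

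Finally, the conclusion follows: by Theorem \ref{cbx}, every commutator in $\mathcal A = B(X)$ is a sum of seven elements from $f(\mathcal A)-f(\mathcal A)$. There is no real obstacle here; the only thing to be careful about is that the isomorphism $B(X)\cong \opm_2(B(X))$ transports commutators to commutators and preserves the image $f(\mathcal A)$ (which it does, since both properties are invariant under algebra isomorphisms), so the conclusion descends from $\opm_2(\mathcal A)$ back to $\mathcal A$ itself.
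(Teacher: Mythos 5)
Your proposal is correct and matches the paper's reasoning exactly: the paper states the corollary after noting that $X\cong X\oplus X$ gives $B(X)\cong\opm_2(B(X))$, and then applies Theorem \ref{cbx}. You simply spell out the standard identification $B(X\oplus X)\cong\opm_2(B(X))$ that the paper leaves implicit.
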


Infinite-dimensional  Hilbert spaces $H$ certainly satisfy  $H\cong H\oplus H$, but for them Corollary \ref{cbx3} is not entirely new; see \cite[Corollary 3.17]{B} and \cite[Theorem 4.2]{BS} (since every operator in $B(H)$ is a sum of two commutators \cite{H}, the conclusions of these two results  are not restricted to commutators).  Corollary \ref{cbx3} may thus be viewed as a generalization from Hilbert spaces to Banach spaces $X$ isomorphic to their square  $X\oplus X$. This actually covers most Banach spaces. In fact, the first infinite-dimensional Banach space not isomorphic to its square was discovered only in 1960 \cite{BP}, thereby answering a problem posed by Banach in 1932.

It is unlikely that the number seven from the last three results is the least possible. 

\begin{question}
What is the smallest number that can replace the number seven in Theorem \ref{cbx} and Corollaries \ref{cbx2}
and \ref{cbx3}?
\end{question}
 
\section{The mutiplicative Waring problem}\label{s4}

We  write $\kk^\times$ for the group of all nonzero scalar  matrices. Our first result in this section is the multiplicative analog of
Theorem \ref{c:diff}.

\begin{thm}\label{c:quot}
For every nonconstant $\rr\in\rx$ and large enough $n\in\N$,
$$\SL{n}\setminus \kk^\times \subseteq \rr(\dom_n\rr)\cdot \rr(\dom_n\rr)^{-1}.$$
\end{thm}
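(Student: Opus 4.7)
The plan is to mirror the proof of Theorem~\ref{c:diff}, replacing the additive decomposition of Lemma~\ref{lnova} with a multiplicative analog. First I would invoke Corollary~\ref{t:dist1} to choose $n$ large enough that some $X\in\dom_n\rr$ yields $D:=\rr(X)$ with $n$ distinct \emph{nonzero} eigenvalues $\lambda_1,\dots,\lambda_n$. Since $\rr$ is a free function, $\rr(PXP^{-1})=P\rr(X)P^{-1}=PDP^{-1}$ for every $P\in\GL{n}$, so the entire similarity class of $D$ sits inside $\rr(\dom_n\rr)$.

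The crux is then the following multiplicative counterpart of Lemma~\ref{lnova}: if $D\in\mat{n}$ has $n$ distinct nonzero eigenvalues $\lambda_1,\dots,\lambda_n$, then every $M\in\SL{n}\setminus\kk^\times$ factors as $M=N_1N_2^{-1}$ with $N_1,N_2$ each similar to $D$. Setting $B:=N_2^{-1}$, this is equivalent to factoring $M=N_1B$ with $N_1$ having spectrum $\{\lambda_i\}$ and $B$ having spectrum $\{\lambda_i^{-1}\}$; the spectral compatibility $\prod_i\lambda_i\cdot\prod_i\lambda_i^{-1}=1=\det M$ is automatic from $M\in\SL{n}$. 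This is precisely the setting of a classical factorization theorem of Sourour: any nonscalar $M\in\GL{n}$ factors as $M=AB$ for arbitrary prescribed nonzero spectra $(\alpha_i),(\beta_i)$ satisfying $\prod_i\alpha_i\beta_i=\det M$. Applying it with $\alpha_i=\lambda_i$ and $\beta_i=\lambda_i^{-1}$ produces the required $N_1=A$ and $N_2=B^{-1}$, both of which lie in $\rr(\dom_n\rr)$ by the previous paragraph, yielding the desired inclusion.

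I expect the main obstacle to be the multiplicative lemma itself. Its additive analog Lemma~\ref{lnova} rests on the elementary fact that any trace-zero matrix is similar to one with vanishing diagonal, but no comparably elementary substitute is available here: one cannot in general similarity-normalize an element of $\SL{n}\setminus\kk^\times$ so that its diagonal entries are prescribed, and a naive LU-style construction is obstructed by the way diagonals interact under matrix multiplication. This is why the argument naturally appeals to Sourour's deeper factorization theorem (whose proof ultimately proceeds by triangularization and induction). The exclusion of $\kk^\times$ is genuinely needed at this step: for a scalar $\alpha I$ any putative factorization $\alpha I=N_1N_2^{-1}$ forces $N_2=\alpha^{-1}N_1$, so $N_2$ would carry the spectrum $\{\alpha^{-1}\lambda_i\}$ rather than $\{\lambda_i\}$, which fails for generic choices of the $\lambda_i$.
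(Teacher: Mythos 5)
Your proposal is correct and matches the paper's proof essentially verbatim: both invoke Corollary~\ref{t:dist1} to obtain $D=\rr(X)$ with $n$ distinct nonzero eigenvalues, then apply Sourour's factorization theorem to write a nonscalar $M\in\SL{n}$ as $M=N_1N_2^{-1}$ with $N_1,N_2$ similar to $D$, and finally absorb the conjugating matrices into the argument of $\rr$. Your added remarks on why Sourour's theorem (rather than a cheap LU-type argument) is needed, and why scalar matrices must be excluded, are accurate and go slightly beyond what the paper spells out.
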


\begin{proof}
Let $n\in\N$ be large enough so that the conclusion of Corollary \ref{t:dist1} holds.  Then there exists $X\in\dom_n\rr$ such that $\rr(X)$ has $n$ distinct and nonzero eigenvalues $\lambda_1,\dots,\lambda_n$.

Take $M\in\SL{n}\setminus \kk^\times$. By \cite[Theorem 1]{Sou} there exists matrices $N_1,N_2\in\GL{n}$ with eigenvalues $\lambda_1,\dots,\lambda_n$ such that $M=N_1N_2^{-1}$. For $j=1,2$ there exists $P_j\in\GL{n}$ such that $N_j=P_j\rr(X)P_j^{-1}$. Then $M= \rr(P_1XP_1^{-1})\rr(P_2XP_2^{-1})^{-1}$.
\end{proof}

We do not know whether the exclusion of $\kk^\times$ is necessary.

\begin{question}
    Is (in Theorem \ref{c:quot}) 
    $\SL{n}\cap \kk^\times $ also contained in $ \rr(\dom_n\rr)\cdot \rr(\dom_n\rr)^{-1}$?
\end{question}

Since the identity matrix is trivially contained in
$ \rr(\dom_n\rr)\cdot \rr(\dom_n\rr)^{-1}$, the following corollary to Theorem \ref{c:quot} holds.

\begin{cor}\label{c:quot2}
For every nonconstant $\rr\in\rx$ and large enough $n\in\N$,
$$\GL{n} \subseteq \kk^\times \cdot \rr(\dom_n\rr)\cdot \rr(\dom_n\rr)^{-1}.$$
\end{cor}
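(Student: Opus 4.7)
The plan is to reduce directly to Theorem \ref{c:quot} by peeling off a scalar. Fix $n$ large enough so that both Corollary \ref{t:dist1} and Theorem \ref{c:quot} apply to $\rr$. Given an arbitrary $A\in\GL{n}$, I would first use the assumption that $\kk$ is algebraically closed to choose $\alpha\in\kk^\times$ with $\alpha^n=\det A$, so that $\alpha^{-1}A\in\SL{n}$. Once the scalar is factored out, everything is reduced to showing that $\alpha^{-1}A$ lies in $\rr(\dom_n\rr)\cdot\rr(\dom_n\rr)^{-1}$, which is exactly the content of Theorem \ref{c:quot} whenever $\alpha^{-1}A\notin\kk^\times$.

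The only remaining case is when $\alpha^{-1}A$ happens to be itself a nonzero scalar matrix, i.e.\ when $A\in\kk^\times$. Here I would invoke the observation highlighted before the statement: by Corollary \ref{t:dist1}, there exists some $X\in\dom_n\rr$ with $\rr(X)$ invertible, so $I=\rr(X)\rr(X)^{-1}\in \rr(\dom_n\rr)\cdot\rr(\dom_n\rr)^{-1}$. Then any scalar matrix $\beta I$ with $\beta\in\kk^\times$ lies in $\kk^\times\cdot\rr(\dom_n\rr)\cdot\rr(\dom_n\rr)^{-1}$, which settles this case as well.

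Combining the two cases gives the containment $\GL{n}\subseteq\kk^\times\cdot\rr(\dom_n\rr)\cdot\rr(\dom_n\rr)^{-1}$. There is no real obstacle here; the proof is essentially just a bookkeeping argument on determinants, and the only subtlety is remembering to separately handle the scalar matrices that were carved out in Theorem \ref{c:quot}, which is precisely what the trivial presentation of the identity as $\rr(X)\rr(X)^{-1}$ is designed to cover.
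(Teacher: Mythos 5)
Your proof is correct and follows exactly the route the paper has in mind: write $A=\alpha B$ with $B\in\SL{n}$ via an $n$-th root of $\det A$, apply Theorem \ref{c:quot} when $B\notin\kk^\times$, and cover the leftover scalar case by noting that $I\in\rr(\dom_n\rr)\cdot\rr(\dom_n\rr)^{-1}$ (with the added care of citing Corollary \ref{t:dist1} to ensure $\rr(\dom_n\rr)$ actually contains an invertible matrix, which the paper brushes off as trivial). No gaps.
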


In the rest of the section we consider the multiplicative Waring problem for noncommutative polynomials. We start with an auxiliary result.

\begin{prop}\label{p:surjdet}
Let $f\in\px$ and $n\in\N$ be such that $f$ is not constant on $\mat{n}^m$. Then $\det f(\mat{n})=\kk$.
\end{prop}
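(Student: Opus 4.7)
The plan is to reduce the proposition to the single assertion that $p(\ulg) := \det f(\gX(n,\xi)) \in \kk[\ulg]$ is a non-constant polynomial. Granting this, surjectivity of $X\mapsto \det f(X)$ onto $\kk$ is automatic: for every $c\in\kk$ the polynomial $p-c$ is again non-constant, hence generates a proper ideal of $\kk[\ulg]$, and by the weak Nullstellensatz admits a zero in $\kk^{mn^2}$. Such a zero corresponds to a tuple $X\in\mat{n}^m$ with $\det f(X)=c$, giving $\det f(\mat{n})=\kk$.

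For the non-constancy of $p$, I would work with the homogeneous decomposition $f=f_0+f_1+\cdots+f_d$, where $f_k\in\px$ is $\ulx$-homogeneous of degree $k$; correspondingly, each $f_k(\gX(n,\xi))$ has entries homogeneous of degree $k$ in $\ulg$. The assumption that $f$ is non-constant on $\mat{n}^m$ means precisely that $f(\gX(n,\xi))\notin\mat{n}$, and this forces at least one $f_k$ with $k\ge 1$ to fail to be a polynomial identity of $\mat{n}$ (otherwise all positive-degree pieces would vanish identically and $f$ would equal the constant $f_0$). Let $d'\ge 1$ be the largest such index. Then $f_k(\gX(n,\xi))=0$ for every $k>d'$, while $f_{d'}(\gX(n,\xi))\in\ud{n}\setminus\{0\}$.

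The key step is then to identify the top-degree part of $p$. Applying the Leibniz expansion to $p=\det\bigl(\sum_k f_k(\gX(n,\xi))\bigr)$, each resulting monomial is a product of $n$ entries and carries total $\ulg$-degree at most $nd'$, with equality exactly when every factor is drawn from the degree-$d'$ piece $f_{d'}(\gX(n,\xi))$. Hence the homogeneous component of $p$ of degree $nd'$ equals $\det f_{d'}(\gX(n,\xi))$. This component is non-zero: since $\ud{n}$ is a division algebra, the non-zero element $f_{d'}(\gX(n,\xi))$ is a unit in $\ud{n}\subseteq \opm_n(\kk(\ulg))$, so it is non-singular and therefore has non-zero determinant. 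Consequently $p$ has a non-trivial homogeneous part in degree $nd'\ge n\ge 1$, and in particular $p$ is non-constant, which completes the proof.

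The only step that needs careful bookkeeping is the identification of the top-degree component of $p$ with $\det f_{d'}(\gX(n,\xi))$; this is essentially the observation that no cross-terms of the Leibniz expansion can match the maximal degree $nd'$. Beyond that, the argument rests entirely on two standard ingredients: that non-zero elements of $\ud{n}$ are units (the division-ring property), and the Nullstellensatz consequence that non-constant polynomials over an algebraically closed field are surjective onto $\kk$.
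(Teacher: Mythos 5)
Your proof is correct, and it reaches the same reduction as the paper---that it suffices to show $\det f(\gX(n,\xi))\in\kk[\ulg]$ is nonconstant---but establishes non-constancy by a genuinely different mechanism. The paper argues by contradiction via Cayley--Hamilton: if $\det f(\gX(n,\xi))=\alpha$, then $f(\gX(n,\xi))\cdot\adj f(\gX(n,\xi))=\alpha$ exhibits $\alpha$ as a product of $f(\gX(n,\xi))$ with a trace polynomial; since the algebra of trace polynomials is a \emph{graded domain} (a nontrivial fact due to Procesi) and the top homogeneous component of $f(\gX(n,\xi))$ has positive degree, comparing top degrees forces the adjugate to vanish, so $\alpha=0$, and then the division-ring property of $\ud{n}$ gives $f(\gX(n,\xi))=0$, a contradiction. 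Your argument instead identifies the top-degree homogeneous component of $\det f(\gX(n,\xi))$ directly: writing $f=\sum_k f_k$ and letting $d'\ge 1$ be the largest index with $f_{d'}$ not a PI of $\mat{n}$, the Leibniz expansion shows that this top component is exactly $\det f_{d'}(\gX(n,\xi))$, which is nonzero because $f_{d'}(\gX(n,\xi))$ is a nonzero (hence invertible) element of the division ring $\ud{n}\subseteq\opm_n(\kk(\ulg))$. Both routes invoke $\ud{n}$ being a division ring, but yours replaces the appeal to the graded-domain structure of trace polynomials with elementary degree bookkeeping in the Leibniz formula; this makes your argument more self-contained and constructive (it pins down the degree of $\det f(\gX(n,\xi))$ as $nd'$), at the modest cost of a slightly longer combinatorial step. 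The final passage from non-constancy to $\det f(\mat{n})=\kk$ is handled correctly in both, yours by an explicit Nullstellensatz appeal, the paper's implicitly by the same reasoning.
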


\begin{proof}
It suffices to see that $\det f(\Xi(n,\xi))\in\kk[\ulg]$ is a nonconstant polynomial. Suppose $\det f(\Xi(n,\xi))=\alpha\in\kk$. By the Cayley-Hamilton theorem, $f(\Xi(n,\xi))\cdot p=\alpha$ for a trace polynomial $p$. The algebra of trace polynomials \cite[Section 2]{Pro} is graded by degree, and has no zero divisors. Let $\tilde{f}$ and $\tilde{p}$ be the homogeneous components of $f(\Xi(n,\xi))$ and $p$, respectively, of highest degree. Since $f$ is not constant on $\mat{n}^m$, we have $\deg \tilde{f}>0$. Then $f(\Xi(n,\xi))\cdot p=\alpha$ implies $\tilde{f}\cdot \tilde{p}=0$, and so $\tilde{p}=0$. Hence $p=0$, and therefore $\alpha=0$. Thus $f(\Xi(n,\xi))$ is not invertible in $\UD_n$, which is a division ring. Hence $f(\Xi(n,\xi))=0$, contradicting the assumption that $f$ is not constant.
\end{proof}

\begin{exa}\label{ex:ratdet}
Let $\rr=x_1x_2x_1^{-1}x_2^{-1}\in\rx$. 
Then $\rr$ is nonconstant (and in particular attains values with pairwise distinct eigenvalues by Theorem \ref{t:dist}), but $\det \rr(X)=1$ for every $X\in\dom_n \rr$ and $n\in\N$.
\end{exa}

\begin{thm}\label{c:prod}
For all nonconstant $f,g\in\px$ and large enough $n\in\N$,
$$\GL{n}\setminus\kk^\times \subseteq f(\mat{n})\cdot g(\mat{n}).$$
\end{thm}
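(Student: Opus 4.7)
My plan is to adapt the proof of Theorem \ref{c:quot} to the product setting, replacing the ratio $N_1 N_2^{-1}$ by a product $A\cdot B$ and invoking the general form of Sourour's factorization theorem \cite[Theorem 1]{Sou}: every non-scalar $M\in\GL{n}$ can be written as $M=AB$ for $A,B\in\GL{n}$ with any prescribed tuples $\lambda_1,\dots,\lambda_n$ and $\mu_1,\dots,\mu_n$ of nonzero eigenvalues, subject only to $\prod_i\lambda_i\cdot\prod_j\mu_j=\det M$. Since $f(\mat{n})$ and $g(\mat{n})$ are closed under conjugation by $\GL{n}$, the problem reduces to producing $Y_0,Z_0$ such that $f(Y_0)$ and $g(Z_0)$ each have $n$ distinct nonzero eigenvalues and $\det f(Y_0)\cdot\det g(Z_0)=\det M$.

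Take $n$ large enough that Corollary \ref{t:dist2} applies, and let $\cO_f,\cO_g\subseteq\mat{n}^m$ be the nonempty Zariski open subsets on which $f,g$ attain values with $n$ distinct nonzero eigenvalues. The crucial auxiliary step is to verify that the image $\det f(\cO_f)$ is cofinite in $\kk^\times$, and similarly for $g$. Since $\cO_f$ is Zariski dense in the irreducible variety $\mat{n}^m$ and $\det f$ is a nonconstant polynomial by Proposition \ref{p:surjdet}, the restriction $\det f|_{\cO_f}$ is also nonconstant. By Chevalley's theorem its image is constructible, and any infinite constructible subset of the affine line $\kk$ is cofinite; together with the fact that $\det f$ does not vanish on $\cO_f$, this image takes the form $\kk^\times\setminus S_f$ for some finite $S_f\subset\kk^\times$. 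Analogously, $\det g(\cO_g)=\kk^\times\setminus S_g$.

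Given $M\in\GL{n}\setminus\kk^\times$ with $\alpha:=\det M$, I would then choose $\beta\in\kk^\times$ avoiding the finite set $S_f\cup\{\alpha/\gamma:\gamma\in S_g\}$, which is possible as $\kk^\times$ is infinite. Picking $Y_0\in\cO_f$ with $\det f(Y_0)=\beta$ and $Z_0\in\cO_g$ with $\det g(Z_0)=\alpha/\beta$, both $f(Y_0)$ and $g(Z_0)$ have simple, nonzero spectra whose products equal $\beta$ and $\alpha/\beta$ respectively. Sourour's theorem now supplies $A,B\in\GL{n}$ with the same eigenvalue lists as $f(Y_0)$ and $g(Z_0)$ and with $M=AB$; simplicity of the spectra forces $A$ to be similar to $f(Y_0)$ and $B$ to $g(Z_0)$, placing $A\in f(\mat{n})$ and $B\in g(\mat{n})$.

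The main obstacle I anticipate is the cofiniteness of $\det f(\cO_f)$ in $\kk^\times$: although Proposition \ref{p:surjdet} ensures $\det f$ is surjective on all of $\mat{n}^m$, it is not immediate that this surjectivity persists when restricted to the potentially thin open set $\cO_f$ of tuples with ``generic'' $f$-values. The Chevalley constructibility argument above resolves this, leveraging both Proposition \ref{p:surjdet} and the density of $\cO_f$ coming from Corollary \ref{t:dist2}. A secondary concern is the exact statement of Sourour's theorem needed, but the cited \cite[Theorem 1]{Sou}, already employed in the proof of Theorem \ref{c:quot}, furnishes precisely the required product factorization with two independently prescribed eigenvalue lists.
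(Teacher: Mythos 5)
Your proposal is correct and follows essentially the same route as the paper's proof: obtain $\cO_f,\cO_g$ from Corollary \ref{t:dist2}, combine Proposition \ref{p:surjdet} with Chevalley's theorem (and the density of $\cO_f$ in the irreducible space $\mat{n}^m$) to get cofiniteness of $\det f(\cO_f)$ and $\det g(\cO_g)$, then choose $X_1\in\cO_f$, $X_2\in\cO_g$ with $\det f(X_1)\det g(X_2)=\det M$ and apply Sourour's factorization together with conjugation-invariance of polynomial images. Your write-up is slightly more explicit than the paper's in spelling out the choice of $\beta$ and in noting that simplicity of spectra forces similarity, but no new idea is involved.
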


\begin{proof}
Let $n\in\N$ be large enough so that the conclusion of Corollary \ref{t:dist2} holds for both $f$ and $g$.
By Corollary \ref{t:dist1}, 
  the set $\cO_f\subseteq\mat{n}^m$ where $f$ attains values with distinct and nonzero eigenvalues is Zariski open and nonempty.
The set $\det f(\cO_f)$ is constructible in $\kk$ by Chevalley's theorem \cite[Theorem 3.16]{Har}. In particular, $\det f(\cO_f)$ is either finite or cofinite in $\kk$. The first case would contradict $\det f(\mat{n})=\kk$ which holds by by Proposition \ref{p:surjdet}, so $\det f(\cO_f)$ is cofinite in $\kk$. Of course, for an analogously defined set $\cO_g$,   $\det g(\cO_g)$ is also cofinite in $\kk$.

Take $M\in \GL{n}\setminus\kk^\times$. Since $\kk\setminus \det f(\cO_f)$ and $\kk\setminus \det g(\cO_g)$ are finite, there exist $X_1,X_2\in\cO$ such that $\det M=\det f(X_1)\det g(X_2)$. By \cite[Theorem 1]{Sou}, there are matrices $N_1,N_2\in\GL{n}$ such that $M=N_1N_2$, eigenvalues of $N_1$ coincide with those of $f(X_1)$, and eigenvalues of $N_2$ coincide with those of $g(X_2)$. Therefore $M=f(P_1XP_1^{-1})g(P_2XP_2^{-1})$ for suitable $P_1,P_2\in\GL{n}$.
\end{proof}

\begin{question}
   Is (in Theorem \ref{c:prod}) 
    $ \kk^\times $ also contained in $ f(\mat{n})\cdot g(\mat{n})$?
\end{question}

At least we know that three factors  are sufficient for covering the whole $\GL{n}$.

\begin{cor}\label{c:prod2}
For all nonconstant $f,g,h\in\px$ and large enough $n\in\N$,
$$\GL{n} \subseteq f(\mat{n})\cdot g(\mat{n})\cdot h(\mat{n}).$$
\end{cor}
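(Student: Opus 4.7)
The plan is to derive this as a quick extension of Theorem~\ref{c:prod}. That theorem already provides $\GL n\setminus\kk^\times\subseteq f(\mat n)\cdot g(\mat n)$ for all large enough $n$, so the only matrices not yet covered by two factors are the nonzero scalar matrices. The role of $h(\mat n)$ will be to convert an arbitrary $M\in\GL n$ into a non-scalar matrix via multiplication by a suitable invertible element of $h(\mat n)$: if I can find $C\in h(\mat n)\cap\GL n$ with $MC^{-1}\notin\kk^\times\cdot I$, then Theorem~\ref{c:prod} writes $MC^{-1}=f(X)g(Y)$, and hence $M=f(X)g(Y)C\in f(\mat n)\cdot g(\mat n)\cdot h(\mat n)$.

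So fix $n$ large enough that both Theorem~\ref{c:prod} applies to the pair $(f,g)$ and Corollary~\ref{t:dist1} applies to $h$, and let $M\in\GL n$. The condition $MC^{-1}\notin\kk^\times\cdot I$ rephrases as $C\notin\kk^\times\cdot M$. By Corollary~\ref{t:dist1} there exists $D\in h(\mat n)$ with $n$ distinct nonzero eigenvalues; in particular $D$ is invertible. Since $h(\mat n)$ is closed under conjugation by $\GL n$ (because $h(PXP^{-1})=Ph(X)P^{-1}$ for polynomials), the entire similarity class of $D$ is contained in $h(\mat n)\cap\GL n$.

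This similarity class, being the $\GL n$-orbit of $D$ under conjugation, has dimension $n^2-n$: the centraliser of a matrix with $n$ distinct eigenvalues is the $n$-dimensional subalgebra of diagonal matrices in an eigenbasis. For $n\ge 2$ we have $n^2-n\ge 2>1$, so this orbit cannot be contained in the one-dimensional subspace $\kk\cdot M$. Any $C=PDP^{-1}$ lying outside $\kk\cdot M$ therefore has all the required properties, and combined with the previous paragraph this completes the argument.

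There is no real obstacle: the statement is essentially a bookkeeping extension of Theorem~\ref{c:prod}. The only substantive point is the elementary dimension comparison showing that a full conjugacy class of a matrix with distinct eigenvalues cannot be squeezed into the one-dimensional line $\kk\cdot M$.
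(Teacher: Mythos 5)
Your proof is correct and takes essentially the same approach as the paper's: both reduce to finding an invertible element $C\in h(\mat{n})$ with $C\notin\kk\cdot M$, exploiting the similarity class of a matrix in $h(\mat{n})$ with $n$ distinct nonzero eigenvalues. The paper gets by with slightly less machinery—it fixes in advance two similar but linearly independent matrices $A_1, A_2\in h(\mat{n})\cap\GL{n}$ and notes that any given $M$ fails to be a scalar multiple of at least one of them, whereas your orbit-dimension computation ($n^2-n\ge 2$) is a mildly heavier way of drawing the same conclusion.
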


\begin{proof} Let $n$ be large enough so that  the conclusion of Theorem \ref{c:prod} holds for $f$ and $g$  as well as 
the conclusion of Corollary \ref{t:dist1}
holds for $h$. Let $A_1\in h(\mat{n})$ be a matrix that has $n$ distinct and nonzero eigenvalues, and let
$A_2$ be a matrix that is similar to but linearly independent with $A_1$.  Note that $A_2$
also lies in $h(\mat{n})$.   By Theorem \ref{c:prod}, $f(\mat{n})\cdot g(\mat{n})\cdot A_i$ contains all matrices from $\GL{n}$ besides possibly scalar multiples of $A_i$, $i=1,2$. This readily implies the desired conclusion. 
\end{proof}

It would be desirable to also cover singular matrices by products of the images of polynomials. In our final result we will obtain a result of such kind, however, by restricting to polynomials that have a root in scalars. By this we mean that there exist  $\alpha_j\in\mathbb K$ such that
$f(\alpha_1,\dots,\alpha_m)=0$. This obviously holds if  $f$ has zero constant term.

\begin{thm}\label{cza}
If  a nonzero polynomial $f$  has a root in scalars, then   for $n$ large enough, every matrix in
$\mat{n}$ is a product of twelve matrices from $f(\mat{n})$.
\end{thm}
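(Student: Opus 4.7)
The plan is to reduce to the case $f(0,\dots,0)=0$, factor an arbitrary $M\in\mat{n}$ through its rank canonical form, and handle the resulting diagonal idempotent via a block-direct-sum construction, treating a small-rank obstruction separately.

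Since $f(\alpha_1,\dots,\alpha_m)=0$ for some scalars $\alpha_i$, the substitution $x_i\mapsto x_i+\alpha_i$ yields a polynomial with the same image on matrix tuples of each size and with zero constant term, so without loss of generality I assume $f(0,\dots,0)=0$. This provides $0\in f(\mat{n})$ and, by the direct-sum compatibility of polynomial evaluation, the block inclusion $f(\mat{k})\oplus 0_{n-k}\subseteq f(\mat{n})$ for all $k\le n$, together with its multiplicative refinement: if $A_1,\dots,A_s\in f(\mat{k})$, then $(A_1\cdots A_s)\oplus 0_{n-k}$ is a product of $s$ elements of $f(\mat{n})$.

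Next, Corollary \ref{c:prod2} (applied with $f=g=h$) supplies a threshold $k_0$ such that $\GL{k}\subseteq f(\mat{k})^3$ for every $k\ge k_0$; I take $n\ge 2k_0$. For $M\in\mat{n}$ of rank $r$, the Smith-type decomposition $M=PD_rQ$ with $P,Q\in\GL{n}$ and $D_r:=I_r\oplus 0_{n-r}$ reduces the problem to establishing $D_r\in f(\mat{n})^6$, since then $M\in f(\mat{n})^3\cdot f(\mat{n})^6\cdot f(\mat{n})^3=f(\mat{n})^{12}$. When $r\ge k_0$ (including $r=n$), the block construction works directly: choose $G_1,G_2,G_3\in f(\mat{r})$ with $G_1G_2G_3=I_r$ via Corollary \ref{c:prod2}, and then $D_r=(G_1\oplus 0)(G_2\oplus 0)(G_3\oplus 0)\in f(\mat{n})^3$, which extends to $f(\mat{n})^6$ by padding with $I_n\in f(\mat{n})^3$. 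The case $r=0$ is immediate since $D_0=0\in f(\mat{n})$.

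The main obstacle is the small-rank range $1\le r<k_0$, where the direct block construction fails because $f(\mat{r})$ need not contain enough invertibles (for instance, $f$ could be a polynomial identity on $\mat{r}$). The plan here is to realize $D_r$ as a product of two higher-rank diagonal idempotents, both of which already belong to $f(\mat{n})^3$ by the previous step. Since $n\ge 2k_0$ gives $n-k_0-r\ge k_0-r$, I can choose a permutation matrix $\pi\in\GL{n}$ such that $\pi D_{k_0}\pi^{-1}$ is diagonal with $1$s in positions $\{1,\dots,r\}\cup T$ for some $T\subseteq\{k_0+r+1,\dots,n\}$ of size $k_0-r$. Pointwise multiplication with $D_{k_0+r}$ (which has $1$s in $\{1,\dots,k_0+r\}$) intersects the sets of $1$-positions down to $\{1,\dots,r\}$, giving $D_r=D_{k_0+r}\cdot(\pi D_{k_0}\pi^{-1})$. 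The first factor lies in $f(\mat{n})^3$ as $k_0+r\ge k_0$, and the second as well since $f(\mat{n})^3$ is closed under $\GL{n}$-conjugation; hence $D_r\in f(\mat{n})^6$, completing the argument.
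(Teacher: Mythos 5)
Your proposal is correct, and it reaches the same bound of twelve via a structurally parallel but genuinely different route.

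The paper's proof rests on Botha's theorem (every matrix in $\mat{n}$ is a product of two diagonalizable matrices), reducing the problem to showing that every diagonalizable matrix is a product of six elements of $f(\mat{n})$; diagonalizable matrices of rank $\ge N_0$ are handled by the same block-extension idea you use (padding with the scalar root), and diagonalizable matrices of small rank are split as a product of two diagonal matrices of high rank with carefully placed nonzero entries. Your proof instead uses the elementary rank factorization $M = P D_r Q$ with $P,Q \in \GL{n}$, so that only the diagonal idempotent $D_r$ needs to be analyzed; the three factors contribute $3+6+3=12$. For small $r$ you factor $D_r = D_{k_0+r}\cdot(\pi D_{k_0}\pi^{-1})$, two overlapping high-rank idempotents — the direct analogue of the paper's $D_1D_2$ split. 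The advantage of your route is that it avoids the external citation to Botha's result, relying only on Smith/rank canonical form; the paper's route is perhaps slightly shorter because it never needs to carry around the two invertible conjugating factors separately. Both approaches use the same threshold $n \ge 2N_0$ and the same normalization device (the paper inserts $\alpha_j I_{n-k}$ blocks directly; you shift variables to reduce to $f(0,\dots,0)=0$ first, which is equivalent).

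One small point to make explicit: in the case $r=0$, you note $D_0 = 0 \in f(\mat{n})$, but you need $D_0 \in f(\mat{n})^6$; this follows by writing $0 = 0\cdot 0\cdots 0$ (six factors, each being $0 = f(0,\dots,0)$), since padding by $I_n \in f(\mat{n})^3$ only adjusts the count in multiples of three. This is a cosmetic gap, not a real one.
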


\begin{proof} By Corollary \ref{c:prod2}, there exists an
  $N_0\ge 1$ such that for every $n\ge N_0$,  every invertible matrix in $\mat{n}$ is a product of three matrices from $f(\mat{n})$.
Set $N=2N_0$ and take $n\ge N$. Since every matrix in $\mat{n}$ is a product of two diagonalizable matrices \cite{Botha}, it is enough to show that
every  diagonalizable matrix  $D\in \mat{n}$ is a product of six matrices from $f(\mat{n})$.

 Write $k$ for the rank of $D$. 
Since $f(\mat{n})$  is closed under conjugation by invertible matrices, we may assume  that  $$D = {\rm diag}(\lambda_1,\dots,\lambda_k,0,\dots,0)$$ where
$\lambda_i$ are  nonzero elements in  $\mathbb K$  (so there are $n-k$ zeros).

Assume first that $k\ge N_0$. We claim that in this case $D$ is actually a product of three  matrices from $f(\mat{n})$ (since this in particular implies that the identity matrix $I$ is a product of three  matrices from $f(\mat{n})$, $D=DI$ is  then a product of 
six matrices from $f(\mat{n})$, as desired). 
 As we know, our assumption on $N_0$ implies that ${\rm diag}(\lambda_1,\dots,\lambda_k)$ is a product of three matrices from $f(\mat{k})$:   $${\rm diag}(\lambda_1,\dots,\lambda_k) = f(A_{11},\dots,A_{1m})\cdot f(A_{21},\dots,A_{2m})\cdot f(A_{31},\dots,A_{3m})$$
with $A_{ij}\in \mat{k}$. 
Let $\alpha_i\in\mathbb K$ be such that
$f(\alpha_1,\dots,\alpha_m)=0$.  
Setting
$$
A_{ij}' = \left[ \begin{matrix} A_{ij} & 0  \cr 0&\alpha_j I_{n-k} \cr \end{matrix} \right]\in \mat{n},
$$
where $I_{n-k}$ is the $(n-k)\times (n-k)$ identity matrix and 
 the zeros are blocks of the appropriate size,
we have 
$$D=  f(A_{11}',\dots,A_{1m}')\cdot f(A_{21}',\dots,A_{2m}')\cdot f(A_{31}',\dots,A_{3m}'),$$
proving our claim.

Now consider the  case where  $k< N_0$.  Set
$$D_1 = {\rm diag}(\lambda_1,\dots,\lambda_k, 1,\dots,1, 0,\dots,0)$$
where  $1$ occurs  $N_0-k$  times and $0$ occurs $n-N_0$ times, and 
$$D_2 = {\rm diag}(1,\dots,  1, 0,\dots,0, 1,\dots,  1 )$$
where $1$ occurs $k$ times in the first sequence,  $0$ occurs $N_0-k$ times, 
and  $1$ occurs $n-N_0$ times in the last sequence. 
Since $D_1$ and $D_2$ are diagonal matrices of rank $N_0$ and $n-N_0+k\ge N_0$, respectively, each of them is a product of three matrices from $f(\mat{n})$. Hence, $D=D_1D_2$ is a product of six matrices from $f(\mat{n})$.
\end{proof}

\begin{question}
What is the smallest number that can replace the number twelve in  Theorem \ref{cza}?
\end{question}

\begin{rem}
	Observe that if $g$ is 2-central for $\mat{n}$, then $f=g^3$ satisfies
	$f(\mat{n})\subseteq \GL{n}\cup\{0\}.$ This may be viewed as a justification for the assumption in Theorem \ref{cza} that $n$ must be large enough (compare Remark \ref{rrr}). 
\end{rem}

\begin{question}\label{q:twelve}
	Does Theorem \ref{cza} hold for all nonconstant polynomials $f$?
\end{question}

\begin{rem}
Question \ref{q:twelve} is related to the following one by Leonid Makar-Limanov: \emph{
	Given a nonconstant $f\in\px$ let $\rho_n=\min\rk f(\mat{n})$ for $n\in\N$; is it true that $\liminf_n  \frac{\rho_n}{n}=0$?
} As observed by Makar-Limanov, an affirmative answer to this question implies that the principal ideal in $\px$ generated by $f$ is proper. The latter seemingly innocuous statement turns to be always true if $\kk$ has characteristic 0 (by the existence of an algebraically closed skew field constructed by Makar-Limanov \cite{makarlimanov}), but remains open for $\kk$ with positive characteristic.
On the other hand, even in characteristic 0, Makar-Limanov's question is compelling to the free analysis community, as its affirmative answer would mitigate the fact that not every nonconstant noncommutative polynomial has a matrix root.

As the proof of Theorem \ref{cza} indicates, a crucial obstruction to answering Question \ref{q:twelve} is a lack of grasp on low-rank values of noncommutative polynomials. To inspire further work in this direction, we pose the following refinement of  Makar-Limanov's question. 

\begin{question}\label{q:last}
If $f\in\px$ is not constant on $n\times n$ matrices, does there exist $X\in\mat{n}^m$ such that $\rk f(X)\le 1$?
\end{question}

\end{rem}

\end{document}